\newtheorem{theorem}{Theorem}[section]
\newtheorem{proposition}{Proposition}[section]
\newtheorem{lemma}{Lemma}[section]
\newtheorem{corollary}{Corollary}[section]
\newtheorem{remark}{Remark}[section]
\newtheorem{observation}{Observation}[section]
\newtheorem{example}{Example}[section]
\begin{document}
\title{The generalized 3-connectivity of\\ Cartesian product
graphs\footnote{Supported by NSFC No. 11071130.}}
\author{\small Hengzhe Li, Xueliang Li, Yuefang Sun
\\
\small Center for Combinatorics and LPMC-TJKLC
\\
\small Nankai
University, Tianjin 300071, China
\\
\small lhz2010@mail.nankai.edu.cn; lxl@nankai.edu.cn;
syf@cfc.nankai.edu.cn}
\date{}
\maketitle

\begin{abstract}
The generalized connectivity of a graph, which was introduced
recently by Chartrand et al., is a generalization of the concept of
vertex connectivity. Let $S$ be a nonempty set of vertices of $G$, a
collection $\{T_1,T_2,\ldots,T_r\}$ of trees in $G$ is said to be
internally disjoint trees connecting $S$ if $E(T_i)\cap
E(T_j)=\emptyset$ and $V(T_i)\cap V(T_j)=S$ for any pair of distinct
integers $i,j$, where $1\leq i,j\leq r$. For an integer $k$ with
$2\leq k\leq n$, the $k$-connectivity $\kappa_k(G)$ of $G$ is the
greatest positive integer $r$ for which $G$ contains at least $r$
internally disjoint trees connecting $S$ for any set $S$ of $k$
vertices of $G$. Obviously, $\kappa_2(G)=\kappa(G)$ is the
connectivity of $G$. Sabidussi showed that $\kappa(G\Box H) \geq
\kappa(G)+\kappa(H)$ for any two connected graphs $G$ and $H$. In
this paper, we first study the $3$-connectivity of the Cartesian
product of a graph $G$ and a tree $T$, and show that $(i)$ if
$\kappa_3(G)=\kappa(G)\geq 1$, then $\kappa_3(G\Box T)\geq
\kappa_3(G)$; $(ii)$ if $1\leq \kappa_3(G)< \kappa(G)$, then
$\kappa_3(G\Box T)\geq \kappa_3(G)+1$. Furthermore, for any two
connected graphs $G$ and $H$ with $\kappa_3(G)\geq\kappa_3(H)$, if
$\kappa(G)>\kappa_3(G)$, then $\kappa_3(G\Box H)\geq
\kappa_3(G)+\kappa_3(H)$; if $\kappa(G)=\kappa_3(G)$, then
$\kappa_3(G\Box H)\geq \kappa_3(G)+\kappa_3(H)-1$. Our result could
be seen as a generalization of Sabidussi's result. Moreover, all the
bounds are sharp.

{\flushleft\bf Keywords}: Connectivity, Generalized connectivity,
Internally disjoint path, Internally disjoint trees.

{\flushleft\bf AMS subject classification 2010}: 05C05, 05C40,
05C76.
\end{abstract}

\section{Introduction}

All graphs in this paper are undirected, finite and simple. We refer
to the book \cite{bondy} for graph theoretic notations and
terminology not described here. Let $G$ be a graph, the connectivity
$\kappa(G)$ of a graph $G$ is defined as $min\{|S|\,|\, S\subseteq
V(G)\ and\ G-S\ is\ disconnected\ or\ trivial\}$. Whitney
\cite{whitney} showed an equivalent definition of the connectivity
of a graph. For each pair of vertices $x,y$ of $G$, let
$\kappa(x,y)$ denote the maximum number of internally disjoint paths
connecting $x$ and $y$ in $G$. Then the connectivity $\kappa(G)$ of
$G$ is $min\{\kappa(x,y)\,|\, x,y\ are\ distinct\ vertices\ of G\}$.

The Cartesian product of graphs is an important method to construct
a bigger graph, and plays a key role in design and analysis of
networks. In the past several decades, many authors have studied the
(edge) connectivity of the Cartesian product graphs. For example,
Sabidussi derived the following result about the connectivity of
Cartesian product graphs.

\begin{theorem}{\upshape \cite{sabidussi}}
Let $G$ and $H$ be two connected graphs. Then $\kappa(G\Box H) \geq
\kappa(G)+\kappa(H)$.
\end{theorem}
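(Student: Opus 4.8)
The plan is to prove the bound through Whitney's characterization of connectivity recalled above: it suffices to exhibit, for every pair of distinct vertices $u=(g_1,h_1)$ and $v=(g_2,h_2)$ of $G\Box H$, at least $\kappa(G)+\kappa(H)$ internally disjoint $u$-$v$ paths. Write $k=\kappa(G)$ and $l=\kappa(H)$, and assume $G$ and $H$ are nontrivial (otherwise $G\Box H$ is a copy of one factor and the inequality is immediate). By Menger's theorem applied inside the factors, fix $k$ internally disjoint $g_1$-$g_2$ paths $P_1,\dots,P_k$ in $G$ and $l$ internally disjoint $h_1$-$h_2$ paths $Q_1,\dots,Q_l$ in $H$; when an endpoint is repeated I will instead use a fan of $k$ (resp. $l$) internally disjoint paths guaranteed by $\kappa(G)=k$ (resp. $\kappa(H)=l$). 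The whole argument then amounts to lifting these paths into the product and splicing horizontal segments (a copy of some $P_i$ sitting at a fixed height) with vertical segments (a copy of some $Q_j$ sitting in a fixed column), so that the resulting $k+l$ walks are genuinely internally disjoint $u$-$v$ paths.

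I would split into two cases. Case A: $g_1\neq g_2$ and $h_1\neq h_2$. Here each product path will leave $u$ and enter $v$ in a prescribed direction so that the $k+l$ first edges at $u$, and likewise the $k+l$ last edges at $v$, are pairwise distinct: $k$ of the paths depart horizontally along the $P_i$ at height $h_1$, and the remaining $l$ depart vertically along the $Q_j$ in column $g_1$. The key device for filling in the middle without collisions is to let a path travel along $P_i$ at height $h_1$, then ``turn'' upward in a single, carefully chosen column, and continue along $P_i$ at height $h_2$ into $v$; because the $P_i$ are internally disjoint in $G$, assigning each path its own turning column keeps the horizontal and the vertical turning segments pairwise disjoint across indices. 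The subtlety is that columns $g_1$ and $g_2$ get fully occupied by the genuinely ``vertical'' paths, so the turns of the horizontal paths must be placed at interior columns and at interior heights; I would then verify by a direct incidence check that, with this placement, no internal vertex is shared.

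Case B: the two vertices agree in one coordinate, say $g_1=g_2=g$ and $h_1\neq h_2$ (the other possibility is symmetric). Inside the single copy $\{g\}\times H\cong H$ one already obtains $l$ internally disjoint paths joining $(g,h_1)$ and $(g,h_2)$. To produce $k$ further paths that are internally disjoint from these and from each other, I would take $k$ internally disjoint paths $P_1',\dots,P_k'$ emanating from $g$ in $G$, lift the $i$-th one to height $h_1$, ascend a fixed $H$-path in its last interior column, and descend along $P_i'$ at height $h_2$ back to $(g,h_2)$; these $k$ excursions meet $\{g\}\times H$ only at the two endpoints and, being supported on internally disjoint subgraphs of $G$, are mutually internally disjoint. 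I expect the main obstacle throughout to be exactly this internal-disjointness bookkeeping rather than any single clever idea: one must make the horizontal and vertical segments occupy disjoint (column, height) sets, which forces all turning points to be distinct, and one must separately dispose of the degenerate subcases in which some $P_i$ or $Q_j$ is a single edge, or in which the endpoint of an excursion happens to lie on another path. Once the construction is pinned down, establishing $\kappa(G\Box H)\ge k+l$ reduces to the routine verification that each listed collection indeed consists of internally disjoint paths.
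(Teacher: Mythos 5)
The paper does not prove this statement: Theorem~1.1 is quoted from Sabidussi's 1957 paper as background, so there is no in-paper proof to compare against, and your proposal must stand on its own. Its architecture (Whitney's characterization, Menger in each factor, then lifting and splicing) is a legitimate route and can be completed; your Case~B is essentially fine (in fact you only need $k$ distinct neighbors of $g$, which exist since $\delta(G)\geq\kappa(G)$, rather than $k$ paths). But in Case~A there is a genuine gap, and it sits exactly at the point you defer to a ``direct incidence check.'' You never specify how the $l$ vertical paths travel from column $g_1$ to column $g_2$, and your only stated disjointness argument (``assigning each path its own turning column'') controls horizontal-versus-horizontal pairs only. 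Make the routing explicit in the natural way --- the $j$-th vertical path climbs $Q_j$ in column $g_1$ to an interior height $t_j$, crosses at height $t_j$ along some fixed $g_1$--$g_2$ path, say $P_1$, and finishes along $Q_j$ in column $g_2$, while the $i$-th horizontal path turns at an interior column $w_i$ of $P_i$ and climbs there along, say, $Q_1$ --- and the check fails: the vertex $(w_1,t_1)$ lies on both the first horizontal path (which climbs through all interior heights of $Q_1$ in column $w_1\in \mathrm{int}(P_1)$) and the first vertical path (whose crossing at height $t_1\in \mathrm{int}(Q_1)$ passes through every interior column of $P_1$). More generally, if the $i$-th horizontal path climbs along $Q_{\sigma(i)}$ and the $j$-th vertical path crosses along $P_{\tau(j)}$, a collision occurs whenever $\tau(j)=i$ and $\sigma(i)=j$; one must choose the assignments so that $\sigma\circ\tau$ is fixed-point free, and with constant assignments (the choice your sketch implicitly suggests) this is impossible.

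The missing idea that repairs this is the classical ``corner'' trick: route one distinguished path of each family through a corner, namely $R_1$ along all of $P_1$ at height $h_1$ and then up column $g_2$ along $Q_1$, and $S_1$ up column $g_1$ along $Q_1$ and then along all of $P_1$ at height $h_2$; the remaining horizontal paths $R_i$ ($i\geq 2$) turn at interior columns $w_i\in \mathrm{int}(P_i)$ climbing along $Q_1$, and the remaining vertical paths $S_j$ ($j\geq 2$) cross at interior heights $t_j\in \mathrm{int}(Q_j)$ along $P_1$. Then the problematic pair disappears ($w_i\notin \mathrm{int}(P_1)$ for $i\geq 2$ and the crossings avoid $\mathrm{int}(Q_1)$ for $j\geq 2$), and the single-edge degeneracies you flagged are absorbed by making any single-edge path the distinguished one. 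So your plan is completable, but as written the central verification would fail; the cross-family routing rule is not bookkeeping one can wave at --- it is the one nontrivial idea in this style of proof. It is also worth knowing that the standard literature proofs avoid this entirely, arguing instead about a minimum separating set of $G\Box H$ (or deducing the bound from the later exact formula $\kappa(G\Box H)=\min\{\kappa(G)|V(H)|,\kappa(H)|V(G)|,\delta(G)+\delta(H)\}$ together with $\delta\geq\kappa$ in each factor).
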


More information about the (edge) connectivity of the Cartesian
product graphs can be found in \cite{chiue,imrich, kla,sabidussi,xu}.

The generalized connectivity of a graph $G$, which was introduced
recently by Chartrand et al. in \cite{chartrand}, is a natural and
nice generalization of the concept of vertex connectivity. A tree
$T$ is called an {\em $S$-tree} ({\em
$\{u_1,u_2,\ldots,u_k\}$-tree}) if $S\subseteq V(T)$, where
$S=\{u_1,u_2,\ldots,u_k\}\in V(G)$. A family of trees $T_1,
T_2,\ldots, T_r$ are {\em internally disjoint $S$-trees} if
$E(T_i)\cap E(T_j)=\emptyset$ and $V(T_i)\cap V(T_j)=S$ for any pair
of integers $i$ and $j$, where $1\leq i<j\leq r$. We use $\kappa(S)$
to denote the greatest number of internally disjoint $S$-trees. For
an integer $k$ with $2\leq k\leq n$, the $k$-$connectivity\
\kappa_k(G)$ of $G$ is defined as $min\{\kappa(S)\,|\, S\in V(G)\
and\ |S|=k\}$. Clearly, when $|S|=2$, $\kappa_2(G)$ is nothing new
but the connectivity $\kappa(G)$ of $G$, that is,
$\kappa_2(G)=\kappa(G)$, which is the reason why one addresses
$\kappa_k(G)$ as the generalized connectivity of $G$. By convention,
for a connected graph $G$ with less than $k$ vertices, we set
$\kappa_k(G)=1$. For any graph $G$, clearly, $\kappa(G)\geq 1 $ if
and only if $\kappa_3(G)\geq 1$.

In addition to being a natural combinatorial measure, the
generalized connectivity can be motivated by its interesting
interpretation in practice. For example, suppose that $G$ represents
a network. If one considers to connect a pair of vertices of $G$,
then a path is used to connect them. However, if one wants to
connect a set $S$ of vertices of $G$ with $|S|\geq 3$, then a tree
has to be used to connect them. This kind of tree for connecting a
set of vertices is usually called a Steiner tree, and popularly used
in the physical design of VLSI, see \cite{Sherwani}. Usually, one
wants to consider how tough a network can be, for the connection of
a set of vertices. Then, the number of totally independent ways to
connect them is a measure for this purpose. The generalized
$k$-connectivity can serve for measuring the capability of a network
$G$ to connect any $k$ vertices in $G$.

In {\upshape\cite{li2}}, Li and Li investigated the complexity of
determining the generalized connectivity and derived that for any
fixed integer $k\geq 2$, given a graph $G$ and a subset $S$ of
$V(G)$, deciding whether there are $k$ internally disjoint trees
connecting $S$, namely deciding whether $\kappa(S)\geq k$, is
$NP$-complete.

Chartrand et al. {\upshape\cite{chartrand}} got the following result
for complete graphs.

\begin{theorem}{\upshape\cite{chartrand}}
For every two integers $n$ and $k$ with $2\leq k\leq n$,
$\kappa_k(K_n)=n-\lceil k/2\rceil$.
\end{theorem}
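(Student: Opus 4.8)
The plan is to exploit the vertex-transitivity of $K_n$: since any $k$-subset of vertices looks like any other under an automorphism, $\kappa_k(K_n)=\kappa(S)$ for an arbitrarily fixed set $S$ of $k$ vertices, so I only need to determine $\kappa(S)$ for one such $S$. Write $R=V(K_n)\setminus S$, so that $|R|=n-k$, and I will establish matching upper and lower bounds, both equal to $n-\lceil k/2\rceil$.

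For the upper bound I would partition any family of internally disjoint $S$-trees into two types: those using at least one vertex of $R$ (say $a$ of them) and those whose vertex set is exactly $S$ (say $b$ of them). Internal disjointness forces the Steiner vertices drawn from $R$ to be pairwise distinct across different trees, so each type-one tree consumes at least one vertex of $R$, giving $a\le n-k$. Each type-two tree is a spanning tree of the complete graph $K[S]\cong K_k$, hence uses exactly $k-1$ edges lying inside $S$; since these trees are pairwise edge-disjoint, counting the edges inside $S$ yields $b(k-1)\le\binom{k}{2}$, that is $b\le\lfloor k/2\rfloor$. Adding the two estimates gives $\kappa(S)=a+b\le (n-k)+\lfloor k/2\rfloor=n-\lceil k/2\rceil$.

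For the lower bound I would construct exactly this many internally disjoint $S$-trees. For each vertex $w\in R$ take the star joining $w$ to all $k$ vertices of $S$; these $n-k$ stars pairwise meet only in $S$ and use only $R$--$S$ edges, so they are internally disjoint. It then remains to supply $\lfloor k/2\rfloor$ trees lying entirely inside $S$, for which I would invoke the classical fact that $K_k$ contains $\lfloor k/2\rfloor$ edge-disjoint spanning trees. These spanning trees use only edges inside $S$, which are untouched by the stars, and any two of them meet only in $S$; hence the stars together with these spanning trees form a family of $(n-k)+\lfloor k/2\rfloor=n-\lceil k/2\rceil$ internally disjoint $S$-trees.

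The main technical point is the lower-bound ingredient that $K_k$ fully decomposes (when $k$ is even) or almost decomposes (when $k$ is odd) into $\lfloor k/2\rfloor$ edge-disjoint spanning trees; this is precisely where the parity underlying $\lceil k/2\rceil$ enters, and one must give or cite an explicit rotational construction of spanning paths to make the count tight. By contrast, the transitivity reduction and the two-type vertex/edge count for the upper bound are routine, though I would check the boundary cases $k=2$ (recovering $\kappa(K_n)=n-1$) and $k=n$ (where $R=\varnothing$ and only the spanning-tree family survives) to confirm the formula.
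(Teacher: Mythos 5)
Your proof is correct; note that the paper itself gives no proof of this statement---it is quoted from Chartrand et al.\ \cite{chartrand}---so there is nothing internal to compare against, but your argument is essentially the original one: the upper bound by observing that trees meeting $R=V(K_n)\setminus S$ use pairwise distinct $R$-vertices (giving $a\leq n-k$) while trees with vertex set exactly $S$ are edge-disjoint spanning trees of $K[S]$ (giving $b(k-1)\leq \binom{k}{2}$, so $b\leq \lfloor k/2\rfloor$), and the lower bound from the $n-k$ stars at outside vertices together with $\lfloor k/2\rfloor$ edge-disjoint spanning trees inside $S$. The one ingredient you lean on---that $K_k$ packs $\lfloor k/2\rfloor$ edge-disjoint spanning trees---is indeed classical (Hamiltonian path/cycle decompositions of complete graphs, or the Nash-Williams--Tutte tree-packing theorem), and you correctly flag it as the parity-carrying step; citing either source closes the argument completely.
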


Okamoto and Zhang \cite{OZ} investigated the generalized
connectivity for regular complete bipartite graphs $K_{a,a}$.
Recently, Li et al. \cite{li1} got the following result for general
complete bipartite graphs.

\begin{theorem}{\upshape\cite{li1}}
Given any two positive integers $a$ and $b$, let $K_{a, b}$ denote a
complete bipartite graph with a bipartition of sizes $a$ and $b$,
respectively. Then we have the following results: if $k>b-a+2$ and
$a-b+k$ is odd then
$$
\kappa_{k}(K_{a,b})=\frac{a+b-k+1}{2}+\lfloor\frac{(a-b+k-1)(b-a+k-1)}{4(k-1)}\rfloor;
$$
if $k>b-a+2$ and $a-b+k$ is even then
$$
\kappa_{k}(K_{a,b})=\frac{a+b-k}{2}+\lfloor\frac{(a-b+k)(b-a+k)}{4(k-1)}\rfloor;
$$
and if $k\leq b-a+2$ then
$$
\kappa_{k}(K_{a,b})=a.
$$
\end{theorem}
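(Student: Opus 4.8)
The plan is to reduce the computation of $\kappa_k(K_{a,b})$ to a one-parameter optimization. Take $a\le b$ without loss of generality (the statement is written for the smaller side $a$), and let $X,Y$ be the parts with $|X|=a$, $|Y|=b$. Since $\mathrm{Sym}(X)\times\mathrm{Sym}(Y)$ acts on $K_{a,b}$ and is transitive on $k$-sets of any fixed type, the value $\kappa(S)$ depends only on the split $(s,t)=(|S\cap X|,|S\cap Y|)$ with $s+t=k$; call it $g(s,t)$. Then $\kappa_k(K_{a,b})=\min_{s+t=k}g(s,t)$, and I expect the two parity cases together with the regime $k\le b-a+2$ to emerge from determining $g$ and locating its minimizer. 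I would record the free-vertex counts $f_X=a-s$, $f_Y=b-t$, so that $f_X+f_Y=a+b-k$.

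For the lower bound I would pack trees explicitly. For a mixed split ($s,t\ge 1$) build two independent families: (i) double stars, one per pair of free vertices $(x,y)\in X'\times Y'$, using the edge $xy$, all edges $y$--$S_X$, and all edges $x$--$S_Y$; these consume $\min(f_X,f_Y)$ free vertices from each side and no $S$--$S$ edge; and (ii) edge-disjoint spanning trees of the complete bipartite graph induced on $S$ itself, of which there are $\lfloor st/(s+t-1)\rfloor=\lfloor st/(k-1)\rfloor$ by the Nash--Williams/Tutte theorem (the all-singleton partition is the binding one, since $st/(s+t-1)\le\min(s,t)$). Families (i) and (ii) are edge-disjoint (one uses only free-incident edges, the other only $S$--$S$ edges) and meet exactly in $S$. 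When $f_X\ne f_Y$, the leftover free vertex on the larger side is folded into the $S$-tree packing, which is the source of the factor $s(t-1)$ rather than $st$ appearing in the odd-case floor.

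For the upper bound I would fix the candidate extremal split — the balanced-free one $s=\tfrac{k-(b-a)}2$ (so $f_X=f_Y=\tfrac{a+b-k}2$) in the even case, and its nearest integer analogue in the odd case — and show $g$ cannot exceed the claimed value there. Given internally disjoint $S$-trees $T_1,\dots,T_m$, track for each $T_i$ the numbers $a_i,b_i$ of free vertices it uses from each side and its number $e_i$ of $S$--$S$ edges, with $\sum a_i\le f_X$, $\sum b_i\le f_Y$, and $\sum e_i\le st$. The crux is a per-tree structural inequality: the $S$--$S$ edges of $T_i$ split $S$ into $k-e_i$ clusters that must be stitched together through the free vertices of $T_i$, forcing a trade-off between free-vertex use and $S$--$S$-edge use. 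Turning this trade-off into the global bound $m\le \tfrac{a+b-k}2+\lfloor st/(k-1)\rfloor$ is the main obstacle, and I expect it to require a careful charging argument rather than a one-line count.

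Finally I would optimize over splits. A short monotonicity/convexity analysis of $g(s,t)$ in $s$ should place the minimum at the balanced-free split, yielding the two displayed floor formulas according to the parity of $a-b+k$. When $k\le b-a+2$ the balanced split would force $s\le 1$; there the pure-$S$ packing contributes nothing and the extremal configuration is $S\subseteq Y$, where every tree needs a private Steiner vertex in $X$, so the $a$ stars centred at the vertices of $X$ are optimal and $\kappa_k=a$. Verifying that the floor formula also evaluates to $a$ at the threshold $k=b-a+2$ confirms that the three cases glue together consistently.
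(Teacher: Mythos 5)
First, a point of reference: the paper you were given does not prove this statement at all --- it is Theorem~1.3, quoted from the cited work of Li, Li and Li \cite{li1} as background. So there is no in-paper proof to compare against, and your proposal must stand on its own. As written, it does not: the hardest half of the theorem is explicitly left open. For the upper bound you correctly reduce to bounding $\kappa(S)$ at the (nearly) balanced split, but the step converting the per-tree trade-off between free-vertex consumption and $S$--$S$-edge consumption into the global bound $m\leq \frac{a+b-k}{2}+\lfloor st/(k-1)\rfloor$ is exactly the substantive counting argument of the theorem, and ``I expect it to require a careful charging argument'' is a placeholder, not a proof. Without it you have an inequality in one direction only.

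The lower bound also has a concrete failure. Since $\kappa_k$ is a minimum over all $S$, you must show $g(s,t)\geq$ the claimed value for \emph{every} split, but your two-family packing (double stars through pairs of free vertices, plus $\lfloor st/(k-1)\rfloor$ Nash--Williams spanning trees inside $S$) provably undershoots at unbalanced mixed splits. Take $K_{5,5}$ with $k=4$: the theorem gives $\kappa_4=\frac{10-4}{2}+\lfloor 16/12\rfloor=4$, yet at the split $(s,t)=(1,3)$ your count is $\min(f_X,f_Y)+\lfloor st/(k-1)\rfloor=\min(4,2)+\lfloor 3/3\rfloor=3$. The true value of $\kappa(S)$ there is $4$, achieved by hybrid trees that spend one $S$--$S$ edge together with a star at a single surplus free vertex (trees of the shape $\{x_1y_1,\,x_2y_1,\,x_2y_2,\,x_2y_3\}$ with $x_2$ free), a tree type absent from your scheme. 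Your ``folding'' remark addresses only the one surplus vertex of the odd balanced case, and even there the configuration as described breaks: if the folded tree attaches $S_X$ via the full star at some $y_1\in S_Y$, then $y_1$ becomes isolated in the residual graph on $S$, so the remaining ``spanning trees of the $S$-packing'' cannot contain $y_1$ and are not $S$-trees; repairing this forces the $s$ attachment edges to be distributed over different vertices of $S_Y$ and requires re-verifying the Nash--Williams condition on the residual graph. Finally, the closing ``monotonicity/convexity analysis of $g(s,t)$'' is circular as stated: $g$ is the unknown quantity, and what you actually need --- that your constructive lower bound dominates the formula at all splits (including the pure splits $s=0$ or $t=0$, and the mixed splits in the regime $k\leq b-a+2$) --- is false for the construction you give, as the example shows.
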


Li et al. {\upshape\cite{li3}} got the following upper bounds of
$\kappa_3(G)$ for general graphs.

\begin{theorem} {\upshape\cite{li3}} Let $G$ be a connected
graph with at least three vertices. If $G$ has two adjacent vertices
with minimum degree $\delta$, then $\kappa_3(G)\leq \delta-1$.
\end{theorem}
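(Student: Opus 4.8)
The plan is to exhibit a single $3$-set $S$ for which $\kappa(S)\le\delta-1$; since $\kappa_3(G)=\min_{|S|=3}\kappa(S)$, this immediately yields the bound. Let $u,v$ be the two adjacent vertices with $d_G(u)=d_G(v)=\delta$, and, since $G$ is connected with at least three vertices, choose any third vertex $w\notin\{u,v\}$. I would take $S=\{u,v,w\}$ and argue by contradiction, assuming $G$ contains $\delta$ internally disjoint $S$-trees $T_1,\dots,T_\delta$.

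The first step is a degree-counting argument at $u$ and at $v$. Because the trees are edge-disjoint, the edges they use at $u$ are distinct, so $\sum_{i=1}^{\delta} d_{T_i}(u)\le d_G(u)=\delta$. On the other hand, every $T_i$ is a tree containing $u$ together with at least two further vertices, so $d_{T_i}(u)\ge 1$ for each $i$. The only way that $\delta$ terms, each at least $1$, can sum to at most $\delta$ is that $d_{T_i}(u)=1$ for all $i$ and that every one of the $\delta$ edges incident with $u$ is used by exactly one of the trees. The identical argument at $v$ gives $d_{T_i}(v)=1$ for all $i$ and consumes all edges at $v$.

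The decisive step concerns the edge $uv$, which is incident with both $u$ and $v$. Since all edges at $u$ are consumed, $uv$ lies in exactly one tree, say $T_1$. In $T_1$ the vertex $u$ has degree $1$, so $uv$ is the unique edge of $T_1$ at $u$; likewise $v$ has degree $1$ in $T_1$, so $uv$ is its unique edge there as well. Hence in the tree $T_1$ both $u$ and $v$ are leaves joined only to each other, which forces $\{u,v\}$ to be a connected component and therefore all of $T_1$. This contradicts $w\in V(T_1)$ with $w\ne u,v$. So $\delta$ internally disjoint $S$-trees cannot exist, giving $\kappa(S)\le\delta-1$ and hence $\kappa_3(G)\le\delta-1$.

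I expect the main obstacle to be pinning down the edge $uv$ argument cleanly: the counting at $u$ and at $v$ individually only yields $\kappa(S)\le\delta$, and one must use the single shared edge $uv$ to squeeze out the extra unit. The subtlety is noticing that the same edge is forced to be a pendant edge at both of its ends simultaneously in the tree that contains it, which can happen only in the degenerate tree consisting of that edge alone.
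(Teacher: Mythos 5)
Your proof is correct, but note that there is nothing in this paper to compare it against: the statement is Theorem~1.4, which the authors quote from \cite{li3} without proof, so it must be judged on its own merits and against the cited reference. On its own merits it is sound: each $T_i$ contains $u$ and at least two other vertices, so $d_{T_i}(u)\geq 1$; edge-disjointness gives $\sum_{i=1}^{\delta} d_{T_i}(u)\leq d_G(u)=\delta$, forcing $d_{T_i}(u)=1$ for all $i$ with every edge at $u$ consumed, and symmetrically at $v$; the shared edge $uv$ then lies in a tree in which both of its endpoints are leaves, which forces that tree to be the single edge $uv$ and contradicts $w\in V(T_1)$. The hypotheses do guarantee the two facts you use implicitly: a third vertex $w$ exists since $|V(G)|\geq 3$, and the degenerate case $\delta=1$ cannot occur (two adjacent degree-one vertices in a connected graph force $|V(G)|=2$). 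Your argument is essentially the same counting idea as the standard proof of this bound, which is usually phrased globally: the $2\delta-1$ edges incident with $\{u,v\}$ must host at least two edges of each tree, so there are at most $\lfloor(2\delta-1)/2\rfloor=\delta-1$ trees. Your localized version buys a little extra: you never use the condition $V(T_i)\cap V(T_j)=S$, only edge-disjointness, so your argument in fact bounds the (a priori larger) number of pairwise edge-disjoint $S$-trees by $\delta-1$ for this choice of $S$.
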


\begin{theorem}{\upshape\cite{li3}}
Let $G$ be a connected graph with $n$ vertices. Then,
$\kappa_3(G)\leq \kappa(G).$ Moreover, the upper bound is sharp.
\end{theorem}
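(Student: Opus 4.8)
My plan is to reduce the inequality to producing a single $3$-set $S$ with $\kappa(S)\le\kappa(G)$, since $\kappa_3(G)=\min_{|S|=3}\kappa(S)$. If $G=K_n$ the claim is immediate from Theorem 1.2, which gives $\kappa_3(K_n)=n-2\le n-1=\kappa(K_n)$, so I would assume $G$ is connected but not complete. Then $k:=\kappa(G)\le n-2$, and I would fix a minimum vertex cut $W$ with $|W|=k$, choose vertices $x$ and $y$ lying in two different components of $G-W$, and try the triple $S=\{x,y,z\}$ for a suitably chosen third vertex $z$.

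The key observation I would exploit is that $W$ separates $x$ from $y$, so every $S$-tree $T_i$ --- being connected and containing both $x$ and $y$ --- contains an $x$-$y$ path that must pass through some vertex $w_i\in W$. If $z\notin W$ (and $z\ne x,y$), then $w_i\ne x,y,z$, so $w_i$ is a genuine internal vertex of $T_i$; since internally disjoint $S$-trees meet only in $S$, the vertices $w_1,\dots,w_r$ are pairwise distinct elements of $W$. This forces $r\le|W|=k$, that is $\kappa(S)\le\kappa(G)$. Whenever $n\ge k+3$ such a $z$ exists outside $W\cup\{x,y\}$, and the argument is complete.

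The main technical obstacle is the degenerate case $n=k+2$, where $V(G)=W\cup\{x,y\}$ and no third vertex avoids the cut. Here I would first combine $\delta(G)\ge\kappa(G)=k$ with $N(x),N(y)\subseteq W$ to conclude $\deg(x)=\deg(y)=k$, so $x$ and $y$ are each adjacent to all of $W$ while $xy\notin E(G)$. Taking $S=\{x,y,w_0\}$ for any $w_0\in W$, the non-$S$ vertices all lie in $W\setminus\{w_0\}$, a set of only $k-1$ elements; every $S$-tree either uses a private vertex from this set (at most $k-1$ such trees, since those internal vertices are pairwise disjoint) or spans exactly $S$, and because $xy\notin E(G)$ the unique tree of the latter type is the path $x\,w_0\,y$ (at most one, by edge-disjointness). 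Hence $\kappa(S)\le(k-1)+1=k$, finishing this case. Finally, for sharpness I would invoke Theorem 1.3: for $b\ge a+1$ it yields $\kappa_3(K_{a,b})=a=\kappa(K_{a,b})$, so the family $K_{a,a+1}$ attains the bound for every value of $\kappa$.
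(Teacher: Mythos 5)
Your proof is correct, but there is nothing in the paper to compare it with: Theorem~1.5 is quoted from \cite{li3} without proof, so your write-up supplies a self-contained argument (and it is the natural cut-based one, in the spirit of the source \cite{li3}). The main case is sound: for non-complete $G$ with a minimum cut $W$, $|W|=\kappa(G)=k$, and $x,y$ in different components of $G-W$, the $x$--$y$ path inside each $S$-tree must meet $W$, and if $z\notin W\cup\{x,y\}$ (which exists once $n\ge k+3$) that vertex of $W$ lies outside $S$; since internally disjoint trees intersect only in $S$, these witnesses are pairwise distinct, forcing $\kappa(S)\le|W|=k$. Your treatment of the genuine boundary case $n=k+2$ is also right: there $V(G)=W\cup\{x,y\}$, $xy\notin E(G)$, $N(x)=N(y)=W$, and for $S=\{x,y,w_0\}$ at most $k-1$ trees can contain an internal vertex (these come from the pool $W\setminus\{w_0\}$ and are private to each tree), while the only tree with vertex set exactly $S$ is the path $x\,w_0\,y$ because the edge $xy$ is absent, so $\kappa(S)\le(k-1)+1=k$. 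The complete case via Theorem~1.2 ($\kappa_3(K_n)=n-2\le n-1$) and the sharpness certificate via Theorem~1.3 ($\kappa_3(K_{a,a+1})=a=\kappa(K_{a,a+1})$, since $k=3\le b-a+2$ when $b=a+1$) are both correct. One small remark: your argument tacitly assumes $n\ge3$ so that a $3$-set exists; for $n=2$ the paper's convention $\kappa_3(G)=1=\kappa(K_2)$ keeps the statement true, and $n=1$ is a degenerate case the cited theorem does not intend to cover, so this is a matter of scope rather than a gap.
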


In this paper, we study the $3$-connectivity of Cartesian product
graphs. The paper is organized as follows. In Section~$2$, we recall
the definition and properties of Cartesian product graphs, and give
some basic results about the internally disjoint $S$-trees. In
Sections~$3$ and $4$, we study the $3$-connectivity of the Cartesian
product of a graph $G$ and a tree $T$, and show that $(i)$ if
$\kappa_3(G)=\kappa(G)\geq 1$, then $\kappa_3(G\Box T)\geq
\kappa_3(G)$; $(ii)$ if $1\leq \kappa_3(G)< \kappa(G)$, then
$\kappa_3(G\Box T)\geq \kappa_3(G)+1$. Moreover, the bounds are
sharp. As a consequence, we get that $\kappa_3(Q_n)=n-1$, where
$Q_n$ is the $n$-hypercube. In Section~$5$, we study the
$3$-connectivity of the Cartesian product of two connected graphs
$G$ and $H$, and show that for any two connected graphs $G$ and $H$
with $\kappa_3(G)\geq\kappa_3(H)$, if $\kappa(G)>\kappa_3(G)$, then
$\kappa_3(G\Box H)\geq \kappa_3(G)+\kappa_3(H)$; if
$\kappa(G)=\kappa_3(G)$, then $\kappa_3(G\Box H)\geq
\kappa_3(G)+\kappa_3(H)-1$. Moreover, all the bounds are sharp. Our
result could be seen as a generalization of Theorem~$1.1$.

\section{Some basic results}

We use $P_n$ to denote a path with $n$ vertices. A path $P$ is
called a {\em u-v path}, denoted by $P_{u,v}$, if $u$ and $v$ are
the endpoints of $P$.

Recall that the $Cartesian\linebreak[2]\ product$ (also called the
{\em square product}) of two graphs $G$ and $H$, written as $G\Box
H$, is the graph with vertex set $V(G)\times V(H)$, in which two
vertices $(u,v)$ and $(u',v')$ are adjacent if and only if $u=u'$
and $(v,v')\in E(H)$, or $v=v'$ and $(u,u')\in E(G)$. Clearly, the
Cartesian product is commutative, that is, $G\Box H\cong H\Box G$.
The edge $(u,v)(u',v')$ is called {\em one-type edge} if $(u,u')\in
E(G)$ and $v=v'$; similarly, the $(u,v)(u',v')$ is called {\em
two-type edge} if $u=u'$ and $(v,v')\in E(H)$.

Let $G$ and $H$ be two graphs with $V(G)=\{u_1,u_2,\ldots,u_n\}$ and
$V(H)=\{v_1,v_2,\ldots,v_m\}$, respectively. We use $G(u_j,v_i)$ to
denote the subgraph of $G\Box H$ induced by the set
$\{(u_j,v_i)\,|\,1\leq j\leq n\}$. Similarly, we use $H(u_j,v_i)$ to
denote the subgraph of $G\Box H$ induced by the set
$\{(u_j,v_i)\,|\,1\leq i\leq m\}$. It is easy to see
$G(u_{j_1},v_i)=G(u_{j_2},v_i)$ for different $u_{j_1}$ and
$u_{j_2}$ of $G$. Thus, we can replace $G(u_{j},v_i)$ by $G(v_i)$
for simplicity. Similarly, we can replace $H(u_{j},v_i)$ by
$H(u_j)$. For $x=(u,v)$, we refer to $(u,v')$ and $(u',v)$ as $the\
vertices\ corresponding\ to\ x$ in $G(v')\ (\,=G(u,v')\,)$ and
$H(u')\ (\,=H(u',v)\,)$, respectively. Similarly, we can define the
path and tree corresponding to some path and tree, respectively.

Imrich and Klav\v{z}ar gave the following result in \cite{imrich}.

\begin{proposition}{\upshape \cite{imrich}}
The Cartesian product of two graphs $G$ and $H$ is connected if and
only if both graphs $G$ and $H$ are connected.
\end{proposition}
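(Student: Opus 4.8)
The plan is to prove both implications directly, exploiting the fact that every edge of $G\Box H$ is either a one-type edge (moving along an edge of $G$ while fixing the $H$-coordinate) or a two-type edge (moving along an edge of $H$ while fixing the $G$-coordinate). Since the Cartesian product is commutative, once I handle one coordinate in the forward direction I can invoke symmetry for the other.

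For the forward direction ($G\Box H$ connected $\Rightarrow$ $G$ and $H$ connected), I would argue by contrapositive. Suppose $G$ is disconnected, so $V(G)$ partitions into nonempty sets $A$ and $B$ with no $G$-edge between them. Consider the induced partition of $V(G\Box H)$ into $A\times V(H)$ and $B\times V(H)$. A two-type edge fixes the $G$-coordinate and so cannot cross this partition; a one-type edge changes the $G$-coordinate along an edge of $G$, but since no edge of $G$ joins $A$ and $B$, it cannot cross either. Hence $G\Box H$ has no edge between the two parts and is disconnected. Applying the same reasoning to $H$ (via commutativity) finishes this direction.

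For the backward direction, assume both $G$ and $H$ are connected and take arbitrary vertices $(u,v)$ and $(u',v')$. Since $G$ is connected, there is a $u$-$u'$ path in $G$, which lifts via one-type edges to a path from $(u,v)$ to $(u',v)$ inside the copy $G(v)$. Since $H$ is connected, there is a $v$-$v'$ path in $H$, which lifts via two-type edges to a path from $(u',v)$ to $(u',v')$ inside the copy $H(u')$. Concatenating these two paths yields a walk, and hence a path, joining $(u,v)$ and $(u',v')$; therefore $G\Box H$ is connected.

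The argument is essentially routine, and I anticipate no deep obstacle. The only step requiring genuine care is the forward direction, where one must confirm that neither edge type can cross the induced partition of the vertex set; this is precisely where the explicit one-type/two-type classification of product edges does the work, so I would state that classification cleanly before using it.
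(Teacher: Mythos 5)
Your proof is correct, and both directions are sound: the contrapositive partition argument (neither one-type nor two-type edges can cross $A\times V(H)$ versus $B\times V(H)$) for necessity, and the concatenation of a lifted $u$-$u'$ path in $G(v)$ with a lifted $v$-$v'$ path in $H(u')$ for sufficiency. Note, however, that the paper does not prove this proposition at all --- it is quoted from Imrich and Klav\v{z}ar \cite{imrich} as a known result --- so there is no in-paper argument to compare against; what you have written is precisely the standard textbook proof, and it is complete as stated.
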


By Proposition $2.1$, we only consider the generalized connectivity
$\kappa_3(G)$ of the Cartesian product of two connected graphs.

\begin{proposition}{\upshape \cite{imrich}}
The Cartesian product is associative, that is, $(G_1\Box G_2)\Box
G_3\cong G_1\Box (G_2\Box G_3)$.
\end{proposition}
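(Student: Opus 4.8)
The plan is to exhibit an explicit isomorphism between the two iterated products by identifying both of their vertex sets with the triple product $V(G_1)\times V(G_2)\times V(G_3)$. Concretely, I would define the map $\phi$ sending the vertex $((u_1,u_2),u_3)$ of $(G_1\Box G_2)\Box G_3$ to the vertex $(u_1,(u_2,u_3))$ of $G_1\Box(G_2\Box G_3)$. Since each side is in obvious bijection with the set of ordered triples $(u_1,u_2,u_3)$, the map $\phi$ is a well-defined bijection on vertices, and the only thing left to check is that it preserves and reflects adjacency.

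First I would unfold the adjacency relation on the left-hand product. By the definition of $\Box$, two vertices $((u_1,u_2),u_3)$ and $((u_1',u_2'),u_3')$ are adjacent in $(G_1\Box G_2)\Box G_3$ precisely when either $(u_1,u_2)=(u_1',u_2')$ and $(u_3,u_3')\in E(G_3)$, or $u_3=u_3'$ and $(u_1,u_2)(u_1',u_2')\in E(G_1\Box G_2)$. Expanding the latter condition once more by the definition of $\Box$ produces three mutually exclusive cases: the two triples agree in two coordinates and differ, along an edge of the corresponding factor, in the remaining one. (They are mutually exclusive because an edge of a simple graph joins distinct vertices, so the differing coordinate is genuinely unequal while the other two coincide.)

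Then I would carry out the same unfolding for the right-hand product $G_1\Box(G_2\Box G_3)$ and observe that it yields the identical symmetric criterion: $(u_1,(u_2,u_3))$ and $(u_1',(u_2',u_3'))$ are adjacent exactly when the triples $(u_1,u_2,u_3)$ and $(u_1',u_2',u_3')$ agree in two coordinates and differ by a factor-edge in the third. Because both adjacency relations translate into the same condition on triples, and $\phi$ merely re-brackets the triple without altering any coordinate, $\phi$ carries edges to edges and non-edges to non-edges, and is therefore an isomorphism.

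I do not expect any genuine obstacle here, since the result is a routine unwinding of definitions. The only point demanding a little care is the bookkeeping in the nested expansion---keeping the three disjuncts straight and confirming that the left and right expansions produce literally the same three cases---so the main effort is organizational rather than mathematical. Associativity for an arbitrary finite number of factors then follows by an immediate induction, which justifies the unbracketed notation $G_1\Box G_2\Box\cdots\Box G_k$ used in the sequel.
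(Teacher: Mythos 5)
Your proposal is correct and is exactly the canonical definitional unwinding: both bracketings reduce to the same adjacency criterion on triples (agree in two coordinates, differ along a factor-edge in the third), so the re-bracketing bijection is an isomorphism. The paper itself gives no proof, citing Imrich and Klav\v{z}ar, and your argument coincides with the standard one found there.
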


In order to show our main results, we need the following well-known
result.

\begin{theorem}[{\upshape Menger's Theorem \cite{bondy}}] Let $G$ be a
$k$-connected graph, and let $x$ and $y$ be a pair of distinct
vertices in $G$. Then there exist $k$ internally disjoint paths
$P_1,P_2,\ldots, P_k$ in $G$ connecting $x$ and $y$.
\end{theorem}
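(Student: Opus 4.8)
The theorem is the ``global'' vertex form of Menger's theorem, and the plan is to derive it from the sharper ``local'' min--max form: for any two distinct \emph{non-adjacent} vertices $x,y$ of $G$, the maximum number of internally disjoint $x$--$y$ paths equals the minimum cardinality of a set $S\subseteq V(G)\setminus\{x,y\}$ for which $G-S$ contains no $x$--$y$ path (an $x$--$y$ \emph{separator}). Granting this, the theorem follows quickly when $xy\notin E(G)$: any $x$--$y$ separator $S$ leaves $x$ and $y$ in distinct components of $G-S$, so $S$ is a vertex cut of $G$ and hence $|S|\ge\kappa(G)\ge k$; the local form then yields at least $k$ internally disjoint $x$--$y$ paths. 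If instead $xy\in E(G)$, I would delete the edge $xy$ and check that every $x$--$y$ separator of $G-xy$ still has at least $k-1$ vertices: a hypothetical separator $T$ of size $\le k-2$ would, after adjoining $x$, give a vertex cut of $G$ of size $\le k-1$ (using $\delta(G)\ge\kappa(G)\ge k$ to dispose of the degenerate case where $x$ has all its neighbours in $T$). The local form then produces $k-1$ internally disjoint $x$--$y$ paths in $G-xy$, to which the edge $xy$ is adjoined as the $k$-th path.

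One inequality of the local form is elementary. If $P_1,\dots,P_r$ are internally disjoint $x$--$y$ paths and $S$ is any $x$--$y$ separator, then each $P_i$ must meet $S$ in an internal vertex, since otherwise $P_i$ would survive in $G-S$ and connect $x$ to $y$; as the internal vertex sets of the $P_i$ are pairwise disjoint, this forces $|S|\ge r$. Hence the number of internally disjoint paths is at most the minimum separator size, and all of the difficulty lies in the reverse inequality.

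For the reverse inequality I would pass to a flow model. Split every vertex $v\notin\{x,y\}$ into an arc $v^-\to v^+$ of capacity $1$, orient each original edge $uv$ into the two arcs $u^+\to v^-$ and $v^+\to u^-$ of capacity $+\infty$, and regard $x$ as source and $y$ as sink. Because the only finite capacities sit on the vertex-arcs, any integral $x$--$y$ flow decomposes into paths using each interior vertex at most once, that is, into internally disjoint $x$--$y$ paths of $G$, while conversely such a family of paths yields an integral flow of the same value. A finite-capacity cut in this network uses only capacity-$1$ arcs and so corresponds exactly to a set of original vertices forming an $x$--$y$ separator. Thus the Max-Flow Min-Cut Theorem, together with integrality of flows under integral capacities, converts the maximum number of internally disjoint paths into the minimum separator size, establishing the local form. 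Alternatively one can prove the local form directly by induction on $|E(G)|$, splitting along a minimum separator when one exists away from $N(x)$ and $N(y)$ and otherwise contracting an interior edge and lifting the paths back; this avoids flows at the cost of a careful case analysis.

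The main obstacle is precisely this reverse inequality: guaranteeing that $k$ pairwise internally disjoint paths \emph{exist} once every separator is known to be large. In the flow formulation the work is concentrated in Max-Flow Min-Cut and the integrality argument; in the inductive formulation it reappears as the bookkeeping needed to recombine paths across a separator or across a contracted edge without introducing a shared interior vertex. By contrast, the non-adjacency reduction and the counting inequality are routine, so the crux of the whole argument is the construction of the path system.
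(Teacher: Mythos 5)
The paper does not prove this statement at all: it is Menger's classical theorem, quoted verbatim with a citation to the textbook of Bondy and Murty and used purely as a tool in the later constructions, so there is no in-paper proof to compare against. Your blind proposal is a correct and complete plan along one of the standard routes. The reduction of the global statement to the local min--max form is sound: for non-adjacent $x,y$ every $x$--$y$ separator $S$ leaves both $x$ and $y$ in $G-S$, so $G-S$ is nontrivial and disconnected, whence $|S|\geq\kappa(G)\geq k$; and in the adjacent case your argument that a separator $T$ of $G-xy$ with $|T|\leq k-2$ yields the vertex cut $T\cup\{x\}$ of size at most $k-1$ is exactly right, with the degenerate case (all neighbours of $x$ other than $y$ lying in $T$) correctly killed by $\delta(G)\geq\kappa(G)\geq k$ --- note you never even need the symmetric case at $y$, since a nonempty $C_x\setminus\{x\}$ already suffices. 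The easy counting direction correctly uses non-adjacency to guarantee each path has an internal vertex meeting $S$, and the vertex-splitting flow network (unit capacity on arcs $v^-\to v^+$, infinite capacity on edge arcs) together with integrality of max flow does establish the hard direction, since any finite cut consists only of vertex arcs and any integral flow decomposes into internally disjoint paths after discarding cycles. In short: your proof is correct and standard; it simply supplies what the paper deliberately imports as a black box, and either your flow argument or the inductive alternative you sketch would serve.
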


Let $G$ be a connected graph, and $S=\{x_1,x_2,x_3\}\subseteq V(G)$.
We first have the following observation about internally disjoint
$S$-trees.

\begin{observation}
Let $G$ be a connected graph, $S=\{x_1,x_2,x_3\}\subseteq V(G)$, and
$T$ be an $S$-tree. Then there exists a subtree $T'$ of $T$ such
that $T'$ is also an $S$-tree such that $1\leq d_{T'}(x_i)\leq 2,\
|\{x_i\,|\, d_{T'}(x_i)=1\}|\geq 2$ and
$\{x\,|\,d_{T'}(x)=1\}\subseteq S$. Moreover, if $|\{x_i\,|\,
d_{T'}(x_i)=1\}|=3$, then all the vertices of $V(T')\setminus
\{x_1,x_2,x_3\}$ have degree 2 except for one vertex, say $x$ with
$d_{T'}(x)=3$; if there exists one vertex of $S$, say $x_1$, has
degree 2 in $T'$, then $T'$ is an $x_2$-$x_3$ path.
\end{observation}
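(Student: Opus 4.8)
The plan is to reduce to a \emph{minimal} $S$-tree and then read off its structure from a short degree-counting argument. First I would choose $T'$ to be a subtree of $T$ that contains $S$ and is minimal with respect to this property (say, with the fewest edges); since $T$ is finite and is itself an $S$-tree, such a $T'$ exists. Equivalently, $T'$ is the union of the three unique $x_i$--$x_j$ paths in $T$. Every structural conclusion in the statement will be extracted from the minimality of $T'$.

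The first key step is a pruning observation: if $v$ is any vertex with $d_{T'}(v) = d \geq 2$, then deleting $v$ breaks $T'$ into $d$ components, and each such component must meet $S$. Indeed, if some component $C$ contained no vertex of $S$, then $T' - V(C)$ would still be a connected subtree containing all of $S$ (note $v \notin C$, so $v$ survives), contradicting the minimality of $T'$. The same idea applied to a leaf $\ell \notin S$ shows that $T' - \ell$ would remain an $S$-tree; hence every leaf of $T'$ lies in $S$, which is exactly the condition $\{x \mid d_{T'}(x) = 1\} \subseteq S$. Applying the pruning observation at a general vertex $v$ shows moreover that the $d$ components obtained by deleting $v$ contain pairwise distinct vertices of $S \setminus \{v\}$, so $d_{T'}(v) \leq |S \setminus \{v\}|$. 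In particular $d_{T'}(x_i) \leq 2$ for each $i$ (giving $1 \leq d_{T'}(x_i) \leq 2$), and every vertex of $T'$ has degree at most $3$, with degree $3$ possible only at a vertex outside $S$.

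Since the leaves of $T'$ form a subset of the three-element set $S$, and a tree has at least two leaves, $T'$ has either two or three leaves; either way at least two of the $x_i$ are leaves, establishing $|\{x_i \mid d_{T'}(x_i) = 1\}| \geq 2$. If $T'$ has exactly two leaves, then $T'$ is a path whose endpoints are two vertices of $S$ and whose remaining $S$-vertex, say $x_1$, is internal of degree $2$; this is the case $|\{x_i \mid d_{T'}(x_i) = 1\}| = 2$, and it yields precisely the asserted $x_2$--$x_3$ path.

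If $T'$ has exactly three leaves, they must be precisely $x_1, x_2, x_3$. Here I would invoke the standard identity $|\{v : d_{T'}(v) = 1\}| = 2 + \sum_{v : d_{T'}(v) \geq 3}(d_{T'}(v) - 2)$, which follows from $\sum_v d_{T'}(v) = 2(|V(T')| - 1)$. With exactly three leaves the right-hand sum equals $1$, and since each summand is a positive integer this forces a single vertex of degree $3$ and no other vertex of degree $\geq 3$; every remaining vertex of $V(T') \setminus S$ then has degree $2$. As this degree-$3$ vertex is not a leaf, it lies outside $S$, matching the final clause. I do not expect a genuine obstacle here: this is essentially the statement that a minimal Steiner tree on three terminals inside a tree is either a path or a subdivided $K_{1,3}$. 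The only points needing care are justifying the pruning step cleanly (that removing a branch keeps the remainder connected and $S$-containing) and the leaf-counting identity, both of which are routine.
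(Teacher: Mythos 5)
Your proposal is correct, and it follows essentially the same route the paper intends: the paper's entire proof is the one-line remark that the claim follows ``by deleting vertices and edges of $T$,'' and your minimal-subtree pruning argument (leaves lie in $S$, the degree bound $d_{T'}(v)\leq|S\setminus\{v\}|$ from branch components meeting $S$, and the leaf-count identity forcing a unique degree-$3$ branch vertex in the three-leaf case) is precisely that deletion argument carried out rigorously. No gaps: your treatment of the two-leaf versus three-leaf dichotomy correctly accounts for every clause of the statement, including that a degree-$2$ vertex of $S$ forces $T'$ to be a path between the other two.
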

\begin{proof}
It is easy to check that this observation holds by deleting vertices
and edges of $T$.
\end{proof}

\begin{remark}
$(i)$ Since the path between any two distinct vertices is unique in
$T$, the tree $T'$ obtained from $T$ is unique in Observation $2.1$.
Such a tree is called a minimal $S$-tree (or minimal
$\{x_1,x_2,x_3\}$-tree).

$(ii)$ Let $S=\{x,y,z\}\subseteq V(G)$. Throughout this paper, we
can assume that each $S$-tree is a minimal $S$-tree.
\end{remark}
\begin{lemma} Let $G$ be a graph with $\kappa_3(G)=k\geq 2$,
$S=\{x,y,z\}\subseteq V(G)$. Then, we have the following result.

$(i)$ If $G[S]$ is a clique, then there exist $k$ internally
disjoint $S$-trees $T_1,T_2,\ldots, T_k$, such that $E(T_i)\cap
E(G[S])=\emptyset$ for $1\leq i\leq k-2$.

$(ii)$ If $G[S]$ is not a clique, then there exist $k$ internally
disjoint $S$-trees $T_1,T_2,\ldots, T_k$, such that $E(T_i)\cap
E(G[S])=\emptyset$ for $1\leq i\leq k-1$.

\end{lemma}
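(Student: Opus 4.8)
The plan is to start from an arbitrary family of $k$ internally disjoint $S$-trees $T_1,\ldots,T_k$, guaranteed to exist since $\kappa_3(G)=k$, and then modify them so that most of them avoid the edges inside $G[S]$. By Remark $2.1(ii)$ I may assume each $T_i$ is a minimal $S$-tree, so each $T_i$ has at most two edges meeting $G[S]$ (by Observation $2.1$, a minimal $S$-tree is either a path through all three vertices of $S$ or a ``star-like'' tree with a single degree-$3$ branch vertex, and in either case at most two of its edges can be edges of $G[S]$). The key counting observation is that $G[S]$ has at most three edges (it is a subgraph of a triangle on $\{x,y,z\}$), so the total number of edges of $G[S]$ that can be used across all the internally disjoint trees is bounded: since the trees are edge-disjoint, the set $E(G[S])$ can be distributed among the $T_i$ with each edge used by at most one tree.

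The central idea is therefore to count how many trees are \emph{forced} to use an edge of $G[S]$. First I would argue that a minimal $S$-tree using an edge of $G[S]$ uses either one or two such edges. If $G[S]$ is a clique (case $(i)$), then $|E(G[S])|=3$, so at most three trees can use edges of $G[S]$; but I want to show that at most two trees need to. The refinement is that whenever two distinct trees each use exactly one edge of $G[S]$, those edges are distinct, so at most three trees use a single such edge, and I can reroute. Concretely, I would show that among the $k$ trees at most two of them are obliged to contain an edge of $G[S]$, by a rerouting/exchange argument: if a tree $T_i$ uses an edge of $G[S]$, say $xy$, I can try to replace that edge by a detour through the remaining structure, provided the detour avoids the other trees. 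Since the $T_i$ are internally disjoint off $S$ and there are $k$ of them, the rerouting is feasible except for a small number of trees that ``block'' each other on the triangle $G[S]$.

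For the concrete bookkeeping I would set up the following. For case $(i)$, the three edges $xy,yz,xz$ of the clique can appear in at most three of the trees in total (edge-disjointness), and each offending tree contains one or two of them; I claim that by reassigning the edges of $G[S]$ among the trees I can arrange that only the last two trees $T_{k-1},T_k$ carry any edge of $G[S]$, leaving $T_1,\ldots,T_{k-2}$ edge-disjoint from $G[S]$. For case $(ii)$, $G[S]$ is missing at least one edge, so $|E(G[S])|\le 2$; the same type of argument then leaves only the single last tree $T_k$ using an edge of $G[S]$, giving the stated $T_1,\ldots,T_{k-1}$ free of $G[S]$-edges. The decrease from two ``bad'' trees to one bad tree exactly mirrors the drop from three available clique-edges to at most two.

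The main obstacle I anticipate is making the rerouting rigorous rather than heuristic: when I remove an edge $e\in E(G[S])$ from a tree $T_i$ and try to reconnect the two resulting components within $G$, I must produce a replacement path that stays internally disjoint from all the other $T_j$ and from the already-modified trees, and that does not reintroduce a $G[S]$-edge. The clean way to do this is to exploit that the trees other than $T_i$ already connect $S$ in $k-1$ internally disjoint ways, so any two vertices of $T_i\setminus e$ lying in $S$ are joined by paths in the ambient graph; but I must verify that enough ``room'' remains after all the simultaneous modifications, which is really a Menger-type / disjoint-paths feasibility check. I expect the argument to go through because the number of trees touching $G[S]$ is at most $|E(G[S])|\le 3$, a constant, so only a bounded number of reroutings are needed, and each can be localized near the triangle on $\{x,y,z\}$ without disturbing the bulk of the other trees.
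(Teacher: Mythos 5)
Your setup matches the paper's: you start from $k$ internally disjoint minimal $S$-trees and observe, by edge-disjointness, that at most $|E(G[S])|\leq 3$ of them can touch $G[S]$, so the only work is to reduce the number of ``offending'' trees from three to two (case $(i)$) or from two to one (case $(ii)$). But at exactly this point your argument has a genuine gap, and you flag it yourself: your reduction mechanism is to delete a clique edge from an offending tree and reconnect the pieces by a detour through the ambient graph, with a ``Menger-type / disjoint-paths feasibility check'' that you never carry out. This check cannot be carried out in general: nothing guarantees any spare capacity in $G$ outside the $k$ trees. For instance, $G$ could be precisely the union of the $k$ trees, in which case every vertex and edge of $G$ is already committed, and no detour internally disjoint from the other trees exists. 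So the expectation that ``enough room remains'' is not merely unverified --- it is false as stated, and the proof does not go through along this route.

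The paper's proof closes exactly this gap by a different mechanism: no new paths from the ambient graph are used at all. When three trees $T_{k-2},T_{k-1},T_k$ all meet $E(G[S])$, the minimality of the trees forces their union into one of a small number of explicit configurations (the paper's Figures $1a$ and $1b$: each offending tree consists of one triangle edge together with a path reaching the third vertex of $S$). The paper then \emph{recombines the pieces of these three trees among themselves} --- reassigning the triangle edges and the connecting paths --- to produce three new internally disjoint $S$-trees $T_{k-2}',T_{k-1}',T_k'$ with $E(T_{k-2}')\cap\{xy,xz,yz\}=\emptyset$ (Figures $1c$, $1d$). Because the exchange is confined to the union $T_{k-2}\cup T_{k-1}\cup T_k$, internal disjointness from $T_1,\ldots,T_{k-3}$ is automatic and no feasibility question ever arises. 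To repair your proof you would need to replace your ambient rerouting step with this structural classification of the offending trees and the explicit local exchange; your counting framework and the case-$(ii)$ bookkeeping (at most two clique edges, hence at most one bad tree after the exchange) are otherwise consistent with the paper's.
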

\begin{proof}
We first prove $(i)$. Clearly, by the definition of $S$-trees, we
know $|\{T_i\, |\, E(T_i)\cap E(G[S])\neq \emptyset\}|\leq 3$. Let
$\{T_1,T_2,\ldots, T_k\}$ be $k$ internally disjoint $S$-trees. If
$|\{T_i\,|\, E(T_i)\cap E(G[S])\neq\emptyset\}|\leq 2$, we are done
by exchanging subscript. Thus, suppose $|\{T_i\,|\, E(T_i)\cap
E(G[S])\neq\emptyset\}|=3$. Without loss of generality, we assume
$E(T_i)\cap E(G[S])\neq\emptyset$, where $i=k-2,k-1,k$. It is easy
to check that $T_{k-2},T_{k-1},T_k$ must have the structures as
shown in Figures $1a$ and $1b$. But, for these two cases, we can
obtain $T_{k-2}',T_{k-1}', T_k'$ from $T_{k-2},T_{k-1}, T_k$, such
that $E(T_{k-2}')\cap \{xy,xz,yz\}=\emptyset$. See Figs. $1c.$ and
$1d$, where the tree $T_{k-2}'$ is shown by dotted lines. Thus
$T_1,T_2,\ldots, T_{k-3},T_{k-2}',T_{k-1}',T_k'$ are our desired
$S$-trees.

The proof of $(ii)$ is similar to that of $(i)$, and thus is
omitted.
\end{proof}

\begin{figure}[h,t,b,p]
\begin{center}
\scalebox{0.6}[0.6]{\includegraphics{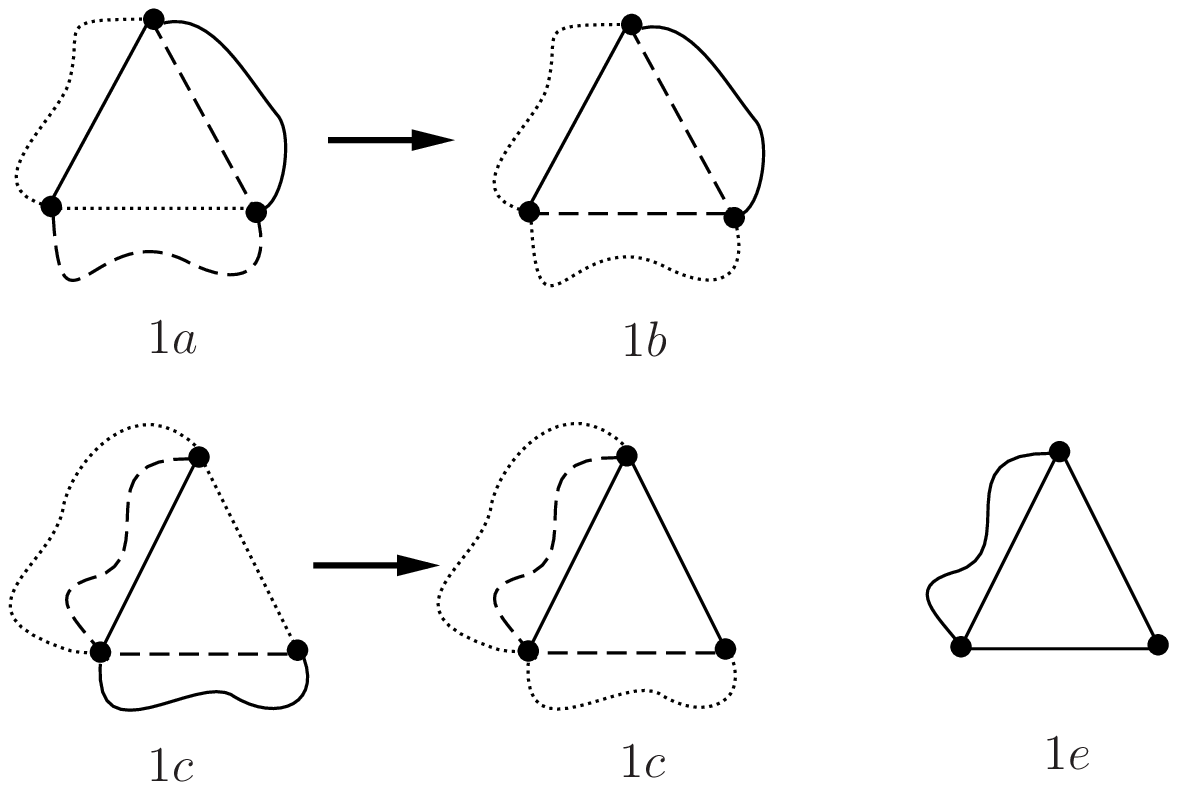}}

Figure 1. $T_{k-2}',T_{k-1}',T_k'$. An edge is shown by a straight
line.\\ The edges (or paths) of a tree are shown by the same type of
lines.
\end{center}
\end{figure}
\begin{remark}
Let $G$ be a graph with $\kappa_3(G)=k\geq 2$, $S=\{x,y,z\}\subseteq
V(G)$. If $|\{E(T_i)\,|\,E(T_i)\cap E(G[S])\neq\emptyset\}|\geq 2$
for any collection $\mathcal{T}$ of $k$ internally disjoint
$S$-trees, then $G[S]$ is a clique. Moreover, $T_{k-1}\cup T_k$ must
have the structure as shown in Figure~$1e$.
\end{remark}

\section{The Cartesian product of a connected graph and a path}

In this section, we show the following theorem.
\begin{theorem} Let $G$ be a graph and $P_m$ be a path with $m$
vertices. We have the following results.

$(i)$ If $\kappa_3(G)=\kappa(G)\geq 1$, then $\kappa_3(G\Box
P_m)\geq \kappa_3(G)$. Moreover, the bound is sharp.

$(ii)$ If $1\leq \kappa_3(G)< \kappa(G)$, then $\kappa_3(G\Box
P_m)\geq \kappa_3(G)+1$. Moreover, the bound is sharp.
\end{theorem}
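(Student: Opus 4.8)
The plan is to realize $G\Box P_m$ as $m$ copies $G(v_1),\dots,G(v_m)$ of $G$ joined along the fibers $H(u)\cong P_m$ sitting over the vertices $u\in V(G)$, and to build the required trees by combining the $\kappa_3(G)$ internally disjoint trees available inside a single copy with vertical (two-type) moves through the fibers. Write $k=\kappa_3(G)$ and let $S=\{x,y,z\}$ with $x=(u_x,v_a)$, $y=(u_y,v_b)$, $z=(u_z,v_c)$; after relabelling the vertices of $P_m$ I may assume $a\le b\le c$. By Remark~2.1 I only need minimal $S$-trees, and by Observation~2.1 each is either a path through the three terminals or a spider with a single degree-$3$ center. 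I would organize the proof by how many of $a,b,c$ agree and by how many of the projections $u_x,u_y,u_z$ agree. The degenerate cases are easy: if $a=b=c$ the terminals lie in one copy $G(v_a)$ and lifting the $k$ trees of $G$ into $G(v_a)$ already gives $k$ $S$-trees; if the projections collapse to one or two vertices of $G$, then $S$ lies in one or two fibers and the trees come from $\kappa(G)$ internally disjoint paths of $G$ via Menger's Theorem (Theorem~2.1). The real content is the case of three distinct levels $a<b<c$ with $u_x,u_y,u_z$ pairwise distinct, which I treat next.

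For that case I would start from $k$ internally disjoint $\{u_x,u_y,u_z\}$-trees $T_1,\dots,T_k$ in $G$, guaranteed by $\kappa_3(G)=k$. For a spider $T_j$ with center $c_j$ I build the product tree $\widetilde T_j$ out of a vertical spine in the fiber $H(c_j)$ running from $(c_j,v_a)$ up to $(c_j,v_c)$, together with the three arms of $T_j$ realized horizontally: the $u_x$-arm at level $a$ ending at $x$, the $u_y$-arm at level $b$ ending at $y$, and the $u_z$-arm at level $c$ ending at $z$. Thus every change of level is absorbed by the spine, inside the fiber over the center. Because $V(T_i)\cap V(T_j)=\{u_x,u_y,u_z\}$, the centers $c_j$ are pairwise distinct interior vertices, so the spines lie in pairwise distinct fibers, while two horizontal arms can meet only in a terminal they share; hence the $\widetilde T_j$ are internally disjoint and $(i)$ follows. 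A path-type $T_j$ is handled the same way, pushing the level change on each of its two segments into the fiber of an interior vertex of that segment.

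The main obstacle is a segment of length one, i.e.\ a terminal joined in $T_j$ directly to its neighbour: then the level change is forced into a terminal fiber $H(u_x)$, $H(u_y)$ or $H(u_z)$. Here Lemma~2.1 is decisive: I may choose the $T_j$ so that at most two of them meet $E(G[\{u_x,u_y,u_z\}])$, hence at most two trees need a terminal fiber, these two can be sent through distinct terminal fibers, and in particular at least one terminal fiber, say $H(u_x)$, is left entirely unused by $\widetilde T_1,\dots,\widetilde T_k$. This free fiber is exactly what powers $(ii)$. When $\kappa(G)>\kappa_3(G)$, i.e.\ $\kappa(G)\ge k+1$, each terminal has degree at least $k+1$ in $G$ while the $k$ arms use only $k$ of those edges, so one further horizontal $G$-direction survives at each terminal; I would then build a $(k+1)$-st tree using $H(u_x)$ as spine from level $a$ to level $c$, branching off at level $b$ to reach $y$ and at level $c$ to reach $z$ along horizontal paths supplied by the spare internally disjoint paths of $G$ that $\kappa(G)\ge k+1$ guarantees. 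I expect the delicate point to be verifying that these branching paths can be chosen, at their respective levels, disjointly from the already-used arms; the free fiber together with the spare $G$-connectivity is what makes it go through, and when $\kappa(G)=\kappa_3(G)$ this spare connectivity is absent, so only $(i)$ can be asserted.

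It remains to dispatch the degenerate cases for $(ii)$ and to check sharpness. If $a=b=c$ I take the $k$ trees inside $G(v_a)$ and add one more by pushing $x,y,z$ one step along their fibers into $G(v_{a\pm1})$ (possible since $m\ge2$) and joining them there by a further $S$-tree, which meets $G(v_a)$ only in $S$. If two or three projections coincide, $S$ lies in one or two fibers and the extra tree is obtained cheaply by leaving the common fiber through distinct neighbours of the repeated projection. For sharpness of $(i)$, take $G=P_2$, so $\kappa_3(G)=\kappa(G)=1$; the ladder $P_2\Box P_m$ has two adjacent vertices of degree $2$, whence $\kappa_3(P_2\Box P_m)\le1$ by Theorem~1.4, matching the lower bound. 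For sharpness of $(ii)$, the $n$-cube satisfies $Q_n\cong Q_{n-1}\Box P_2$, and an induction through $(ii)$ together with the bound $\kappa_3(Q_n)\le\delta-1=n-1$ of Theorem~1.4 yields $\kappa_3(Q_n)=n-1$.
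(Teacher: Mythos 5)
Your construction of the first $k$ trees in the main case (three distinct levels $a<b<c$, distinct projections) is sound, and it is in fact a cleaner packaging than the paper's: spines in the center fibers $H(c_j)$, arms at levels $a,b,c$, with Lemma~2.1 limiting the length-one segments that force a terminal fiber. But the $(k+1)$-st tree is where the proof breaks, and you have only flagged the problem, not solved it. You claim that $\kappa(G)\ge k+1$ supplies horizontal branch paths at level $b$ from $(u_x,v_b)$ to $y$ (and at level $c$ to $z$) avoiding the already-used arms. This is not a legitimate application of Menger's theorem: the blocking set at level $b$ is the union of the internal vertices of the $k$ $u_y$-arms, which are paths from the centers $c_i$ to $u_y$, not a family of internally disjoint $u_x$-$u_y$ paths. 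In particular, every neighbour of $u_y$ in $G$ can lie on some arm even though only $k$ edges incident to $u_y$ are tree edges (an arm may pass through $N(u_y)$ without using an edge at $u_y$), in which case every $u_x$-$u_y$ path is blocked at its last step and no ``spare degree'' at $u_y$ helps. One could try to reroute the extra tree through levels where only the $k$ spine fibers are occupied (there $\kappa(G)\ge k+1$ does let you avoid the $k$ centers), but such free levels need not exist, e.g.\ when $m=3$ and $(a,b,c)=(1,2,3)$, so this repair is incomplete as well.

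The paper sidesteps this entirely, and it is instructive to see how: it projects all three terminals into a single copy (Lemmas~3.1--3.3 project into $G(v_1)$), takes $k$ internally disjoint trees on the projected set $S'=\{x,y',z'\}$, pushes each level change into the fiber of a \emph{neighbour} $z_i$ of the projected terminal (adding $z_iz_i'$, $z_i'z$ and deleting $z'$), and then obtains the $(k+1)$-st tree for free by \emph{duplicating} the last tree in the adjacent copy: $T_k^*=T_k+zz'$ and $T_{k+1}^*=T_k'+xx'+yy'$, where $T_k'$ is the copy of $T_k$ in $G(v_2)$. These two trees share only $x,y,z$, so no extra connectivity is needed in the generic case at all; the hypothesis $\kappa(G)>\kappa_3(G)$ is invoked only in the degenerate cases where two projections coincide (Case~2 of Lemma~3.2, Cases~2--3 of Lemma~3.3), and there Menger applies cleanly because the used structure really is a family of $k+1$ internally disjoint paths. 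Your treatment of the degenerate cases and your sharpness examples ($P_2\Box P_m$ for $(i)$, the hypercube induction for $(ii)$, matching Corollary~3.1) are fine, but as written the central construction for part $(ii)$ has a genuine gap exactly at the step you yourself called delicate.
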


We shall prove Theorem $3.1$ by a series of lemmas. Since the proofs
of $(i)$ and $(ii)$ are similar, we only show $(ii)$. Let $G$ be a
graph with $V(G)=\{u_1,u_2,\ldots,u_n\}$ such that $1\leq
\kappa_3(G)< \kappa(G)$, $V(P_m)=\{v_1,v_2,\ldots,v_m\}$ such that
$v_i$ and $v_j$ are adjacent if and only if $|i-j|=1$.

Set $\kappa_3(G)=k$ for simplicity. To prove $(ii)$, it suffices to
prove that for any $S=\{x,y,z\}\subseteq V(G\Box H)$, there exist
$k+1$ internally disjoint $S$-trees. We proceed our proof by the
following three lemmas.

\begin{lemma} If $x,y,z$ belongs to the same $V(G(v_i))$, $1\leq i\leq m$, then there
exist $k+1$ internally disjoint $S$-trees.
\end{lemma}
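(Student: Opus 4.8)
We are given three vertices $x,y,z$ all lying in the same copy $G(v_i)$ of $G$. Since $\kappa_3(G)=k$, inside the single copy $G(v_i)$ (which is isomorphic to $G$) there exist $k$ internally disjoint $S$-trees $T_1,T_2,\ldots,T_k$ connecting $S=\{x,y,z\}$. These $k$ trees are already pairwise edge-disjoint and meet only in $S$, so the entire difficulty is to manufacture \emph{one additional} tree that is internally disjoint from all of them. The natural source for this extra tree is the path direction: because $\kappa(G)>\kappa_3(G)=k$, the copy $G(v_i)$ is at least $(k+1)$-connected, giving us more paths than the trees consume, and the adjacent copies $G(v_{i-1})$ and/or $G(v_{i+1})$ provide "detour" vertices outside $V(G(v_i))$ that no $T_j$ can use.

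**Constructing the extra tree.**

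The plan is to build $T_{k+1}$ that leaves $G(v_i)$, travels through a neighboring copy, and returns, thereby using vertices disjoint from $\bigcup_j V(T_j)\subseteq V(G(v_i))$. Concretely, I would first invoke Lemma~$2.1$ to arrange the $k$ trees so that as few of them as possible use the edges of $G[S]$ inside $G(v_i)$. Since $G(v_i)\cong G$ is $(k+1)$-connected, Menger's Theorem (Theorem~$2.2$) guarantees that between any two of $x,y,z$ there are at least $k+1$ internally disjoint paths in $G(v_i)$; the trees $T_1,\ldots,T_k$ use at most $k$ of these "slots," so there is spare routing capacity. The cleanest construction is to take the vertices $x',y',z'$ corresponding to $x,y,z$ in the adjacent copy $G(v_{i-1})$ (assuming $m\ge 2$; if $i=1$ use $v_{i+1}$ instead), join $x,y,z$ to $x',y',z'$ by the three two-type edges, and connect $x',y',z'$ inside $G(v_{i-1})$ by a single $\{x',y',z'\}$-tree. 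This yields an $S$-tree whose only vertices in $V(G(v_i))$ are $x,y,z$ themselves, so it is automatically internally disjoint from every $T_j$.

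**The main obstacle.**

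The step I expect to be delicate is not the existence of $T_{k+1}$ but verifying \emph{internal disjointness} cleanly, together with handling the boundary case. The tree in $G(v_{i-1})$ connecting $x',y',z'$ exists because $G(v_{i-1})\cong G$ has $\kappa_3=k\ge 1$, so a single $S$-tree certainly exists there; the three two-type edges are disjoint from everything in $G(v_i)$ by construction, since one-type and two-type edges never coincide. The genuine subtlety is to confirm that $T_{k+1}$ meets each $T_j$ in exactly $S$ and nowhere else, which follows because $V(T_{k+1})\cap V(G(v_i))=\{x,y,z\}=S$. I would therefore state and check precisely: $E(T_{k+1})$ consists of the three two-type edges plus edges lying in $G(v_{i-1})$, none of which lie in $G(v_i)$, so $E(T_{k+1})\cap E(T_j)=\emptyset$ for all $j\le k$, and $V(T_{k+1})\cap V(T_j)\subseteq V(T_{k+1})\cap V(G(v_i))=S$. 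The one case requiring care is when $m=1$, i.e.\ $P_m$ is a single vertex and no adjacent copy exists; but then $G\Box P_m\cong G$ and the statement reduces to $\kappa_3(G)=k\ge k+1$ being unnecessary—indeed for $m=1$ the claim $\kappa_3(G\Box P_m)\ge k+1$ is vacuous or handled separately, so I would assume $m\ge 2$ throughout, which is the only regime in which the detour construction is needed.
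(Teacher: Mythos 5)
Your proposal is correct and follows essentially the same route as the paper: keep the $k$ internally disjoint $S$-trees inside $G(v_i)$ and build the extra tree in an adjacent copy, attached via the three two-type edges $xx',yy',zz'$ (the paper simply uses the copy of $T_1$ in $G(v_2)$ where you allow an arbitrary $\{x',y',z'\}$-tree, an immaterial difference). Your invocations of Lemma~2.1 and Menger's Theorem are unnecessary for this lemma, but they do no harm, and your explicit handling of $m\geq 2$ is a point the paper leaves implicit.
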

\begin{proof}
Without loss of generality, we assume $x,y,z\in V(G(v_1))$. Since
$\kappa_3(G)=k$, there exist $k$ internally disjoint $S$-trees
$T_1,T_2,\ldots, T_k$ in $G(v_1)$. We need another $S$-tree
$T_{k+1}$ such that $T_{k+1}$ and $T_i$ are internally disjoint,
where $i=1,2,\ldots,k$. Let $x',y',z'$ be the vertices corresponding
to $x,y,z$ in $G(v_2)$, and $T_1'$ be the tree corresponding to
$T_1$ in $G(v_2)$. Therefore, The tree $T_{k+1}$ obtained from
$T_1'$ by adding three edges $xx', yy',zz'$ is a desired tree.
\end{proof}

\begin{lemma} If exact two of $x,y,z$ are contained in some $G(v_i)$, then there
exist $k+1$ internally disjoint $S$-trees.
\end{lemma}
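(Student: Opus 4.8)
The plan is to exploit that we are not in the tight regime: since $1\leq \kappa_3(G)<\kappa(G)$ we have $\kappa(G)\geq k+1$, so by Menger's Theorem (Theorem $2.1$) any two vertices of $G$ are joined by $k+1$ internally disjoint paths, whereas $\kappa_3(G)=k$ still only guarantees $k$ internally disjoint trees connecting three vertices. I first normalize the configuration by writing the two co-level vertices as $x=(a,v_p)$, $y=(b,v_p)$ (so $a\neq b$) and the third as $z=(c,v_q)$ with $q\neq p$; reversing $P_m$ if necessary we may assume $p<q$. The argument then splits according to whether $c\in\{a,b\}$ or not.

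I would treat the case $c\in\{a,b\}$ first, say $c=a$, so that $x$ and $z$ lie in the same column $H(a)$ and $S=\{(a,v_p),(b,v_p),(a,v_q)\}$ projects onto just $\{a,b\}$. Here I use the $k+1$ internally disjoint $a$-$b$ paths $Q_1,\dots,Q_{k+1}$ from Menger's Theorem. For each $Q_j$ possessing an internal vertex I pick one such vertex $w_j$ (these are pairwise distinct since the $Q_j$ are internally disjoint) and build the tree $\mathcal{T}_j$ from the copy $Q_j(v_p)$ (a path joining $x$ to $y$ through $(w_j,v_p)$), the segment of $H(w_j)$ from $(w_j,v_p)$ to $(w_j,v_q)$, and the copy at level $v_q$ of the $a$-to-$w_j$ part of $Q_j$ (a path from $(w_j,v_q)$ to $(a,v_q)=z$). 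At most one $Q_j$ is the single edge $ab$; for that one I route straight up $H(a)$ from $x$ to $z$. A direct check of the three possible overlaps (two copies at level $v_p$, two columns, two copies at level $v_q$) shows these $k+1$ trees meet only in $S$.

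When $a,b,c$ are pairwise distinct I would start from $k$ internally disjoint $\{a,b,c\}$-trees $T_1,\dots,T_k$, chosen by Lemma $2.1$ so that all but at most one of them (resp.\ two, if $G[\{a,b,c\}]$ is a clique) avoid every edge of $G[\{a,b,c\}]$. By Observation $2.1$ each minimal $T_i$ is either a ``Y'' with a single non-terminal branch vertex or a path through one of $a,b,c$. In either shape I lift $T_i$ to a product tree $\mathcal{T}_i$ by realizing the portion joining $a$ and $b$ at level $v_p$, transitioning to level $v_q$ through one or two suitable non-terminal columns (the branch vertex of a Y, or the vertices flanking the internal terminal of a path), and realizing the remaining portion reaching $c$ at level $v_q$, so that $\mathcal{T}_i$ meets $z=(c,v_q)$ from within $G(v_q)$. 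Since the $T_i$ are internally disjoint, their non-terminal vertices, hence the transition columns, are pairwise distinct, and the level-$v_p$ and level-$v_q$ pieces inherit disjointness from the $T_i$. This yields $k$ internally disjoint trees, and for the $(k+1)$-th I would use an extra $a$-$b$ path supplied by the spare unit of $\kappa(G)$ together with a level adjacent to $v_p$ that the other trees avoid entirely, routing up $H(c)$ to reach $z$.

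The main obstacle is the terminal-column bottleneck: the columns $H(a),H(b),H(c)$ are each single paths, so at most one tree may use any one of them internally, yet several internally disjoint $\{a,b,c\}$-trees can pass through a common terminal (for instance two $a$-$b$ paths both through $c$), each seemingly demanding $H(c)$. The point that saves the construction is that a tree needs a terminal column only when its branch to $c$ uses an edge of $G[\{a,b,c\}]$, and Lemma $2.1$ forces all but one or two of the trees to avoid those edges; for the exceptional trees I reroute the transition through a non-terminal neighbor of the offending terminal, which exists because every terminal has degree at least $\kappa(G)\geq k+1$. Verifying that the rerouted transitions, the $k$ main trees, and the last tree are pairwise internally disjoint, by checking each crossing of level-$v_p$ pieces, level-$v_q$ pieces, and columns, is the technical heart of the proof; the hypothesis $\kappa(G)>\kappa_3(G)$ enters precisely here, both to furnish the extra $a$-$b$ connection for the $(k+1)$-th tree and to provide the spare neighbors that relieve the terminal-column congestion.
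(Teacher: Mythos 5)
Your degenerate case $c\in\{a,b\}$ is correct and is essentially the paper's Case 2 (Menger's Theorem applied to $k+1$ internally disjoint $a$-$b$ paths, with at most one single-edge path treated specially), and your lifting of the $k$ trees in the main case parallels the paper's Subcase 1.1: delete $(c,v_p)$ from each lifted tree and transition to level $v_q$ through the columns of the one or two neighbours of the terminal, which are pairwise distinct across trees by internal disjointness. The genuine gap is your $(k+1)$-th tree. You want ``an extra $a$-$b$ path supplied by the spare unit of $\kappa(G)$'' realized at ``a level adjacent to $v_p$ that the other trees avoid entirely,'' and this fails twice over. First, Menger's Theorem yields $k+1$ \emph{pairwise} internally disjoint $a$-$b$ paths; it does not yield an $a$-$b$ path internally disjoint from a \emph{prescribed} family of $k$ internally disjoint $\{a,b,c\}$-trees, since the branches of those trees toward $c$ may meet every further $a$-$b$ path. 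Second, a spare level need not exist: in $G\Box P_2$ (precisely the case driving Corollary 3.1 on $Q_n$) the levels $v_p=v_1$ and $v_q=v_2$ are all there are, your $k$ lifted trees already occupy both, and the last tree has nowhere to live. The same objection defeats your congestion-relief step: a terminal of degree at least $\kappa(G)\geq k+1$ certainly has a non-terminal neighbour, but nothing prevents \emph{all} neighbours of that terminal from occurring as internal vertices of the other trees, so the rerouted column you propose may collide with them; counting neighbours does not substitute for the disjointness you need.

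The device you are missing is that the paper never lifts all $k$ trees and then hunts for an extra one; it spends one tree twice. With $S'=\{x,y,z'\}$ in $G(v_1)$ and $z'$ the vertex below $z$, the paper reroutes only $T_1,\ldots,T_{k-1}$ around $z'$ as you do, then sets $T_k^*=T_k+zz'$ (the intact tree at level $v_1$ plus the vertical edge in the column of $c$) and $T_{k+1}^*=T_k'+xx'+yy'$, where $T_k'$ is the copy of $T_k$ at level $v_2$ --- note the copy of $z'$ in $T_k'$ is $z$ itself. These two trees intersect exactly in $\{x,y,z\}$, consume only the three terminal columns' vertical edges, and fit inside the two levels $v_p,v_q$, so no spare level and no extra Menger path are ever needed; the hypothesis $\kappa(G)>\kappa_3(G)$ is reserved for the degenerate case $z'\in\{x,y\}$. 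Finally, the at most two exceptional trees meeting $E(G[S'])$ are not repaired by spare-neighbour rerouting but are absorbed into this doubling through the explicit configurations of Remark 2.2 and Figure 2, which is where the clique structure forced by Lemma 2.1 is actually used.
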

\begin{proof}

We may assume $x,y\in V(G(v_1)),z\in V(G(v_2))$. In the following
argument, we can see that this assumption has no influence on the
correctness of our proof. Let $x',y'$ be the vertices corresponding
to $x,y$ in $G(v_2)$, $z'$ be the vertex corresponding to $z$ in
$G(v_1)$. Consider the following two cases.

\begin{figure}[h,t,b,p]
\begin{center}
\includegraphics{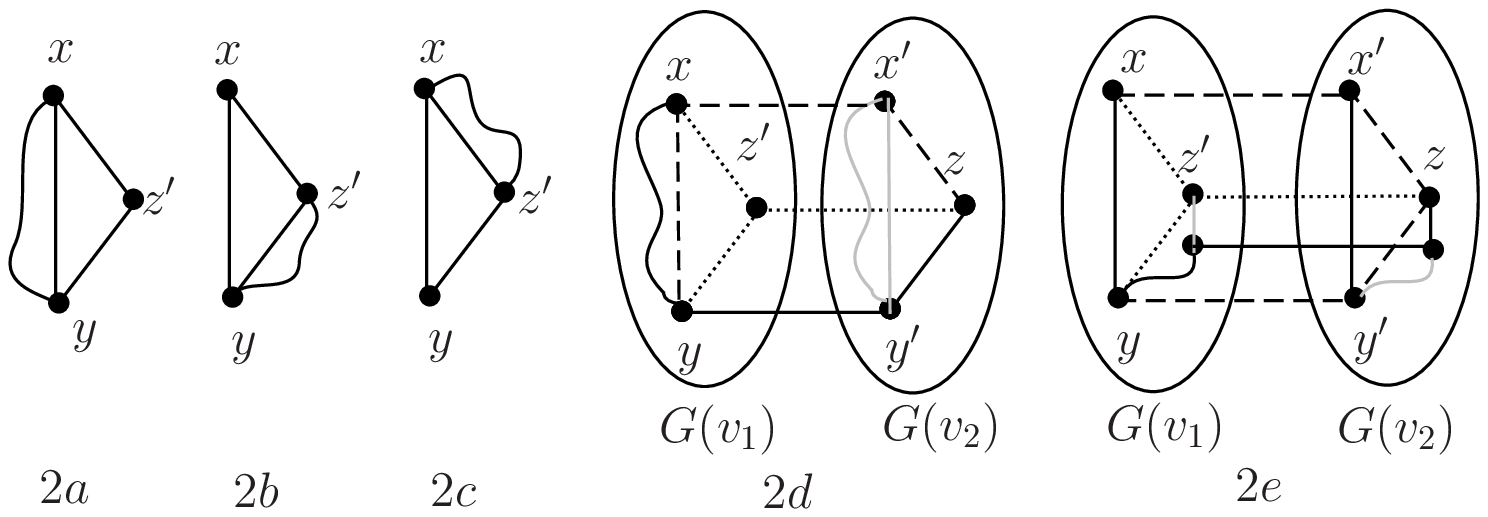}

Figure~2. The edges (or paths) of a tree are shown by the same type
of lines.\\ The lightest lines stand for edges (or paths) not
contained in $T_i^*$.
\end{center}
\end{figure}

{\flushleft\textbf{Case 1.}}\quad $z'\not\in \{x,y\}$.

Let $S'=\{x,y,z'\}$, and $T_1,T_2,\ldots T_k$ be $k$ internally
disjoint $S'$-trees in $G(v_1)$ such that $|\{T_i\,|\,E(T_i)\cap
E(G(v_1)[S']\}\neq\emptyset|$ is as small as possible. We can assume
that $E(T_i)\cap E(G(v_1)[S'])=\emptyset$ for each $i$, where $1\leq
i\leq k-2$ by Lemma~$2.1$.

{\flushleft\textbf{Subcase 1.1.}} $E(T_i)\cap
E(G(v_1)[S'])=\emptyset$ for each $i$, where $1\leq i\leq k$.

Assume that $d_{T_i}(z')=1$ for $1\leq i\leq k_1$ and
$d_{T_i}(z')=2$ for $k_1+1\leq i\leq k$. If $k_1=k$, let $T_i^*$ be
the tree obtained from $T_i$ by adding $z_iz_i'$ and $z_i'z$, and
deleting $z'$, where $1\leq i\leq k-1$ and $z_i$ is the only
neighbor of $z'$ in $T_i$, and $Z_i'$ is the vertex corresponding to
$z_i$ in $G(v_2)$. Let $T_k^*=T_k+zz'$ and $T_{k+1}^*=T_k'+xx'+yy'$,
where $T_k'$ is the tree in $G(v_2)$ corresponding to $T_k$. Thus
$T_1^*,T_2^*,\ldots,T_{k+1}^*$ are $k+1$ internally disjoint
$S$-trees.

Now suppose $k_1<k$. For $1\leq i\leq k_1$, we construct $T_i^*$
similar to the above procedure. For $k_1+1\leq i\leq k-1$, let
$T_i^*$ be the tree obtained from $T_i$ by adding
$z_{i,1}z_{i,1}',z_{i,1}'z,z_{i,2}z_{i,2}'$ and $z_{i,2}'z$ and
deleting $z'$, where $k_1+1\leq i\leq k$,
$N_{T_i}(z')=\{z_{i,1},z_{i,2}\}$, and $z_{i,1}'$ and $z_{i,2}'$ are
the vertices corresponding to $z_{i,1}$ and $z_{i,2}$ in $G(v_2)$,
respectively. Let $T_k^*=T_k+zz'$ and $T_{k+1}^*=T_k'+xx'+yy'$,
where $T_k'$ is the tree in $G(v_2)$ corresponding to $T_k$. Thus
$T_1^*,T_2^*,\ldots,T_{k+1}^*$ are $k+1$ internally disjoint
$S$-trees.

{\flushleft\textbf{Subcase 1.2.}} $E(T_i)\cap
E(G(v_1)[S'])\neq\emptyset$ for some $i$, where $i=k-1,k$.

For a tree $T_i$ with $E(T_i)\cap E(G(v_1)[S'])=\emptyset$, we can
construct ${T_i}^*$ similar to that of Subcase $1.1$.

If $E(T_{k-1})\cap E(G(v_1)[S'])=\emptyset$ and $E(T_k)\cap
E(G(v_1)[S'])\neq\emptyset$, say $y'z\in E(T_k)\cap E(G(v_1)[S'])$.
Let $T_k^*=T_k+zz'$ and $T_{k+1}^*=T_k'+xx'+yy'$, where $T_k'$ is
the tree corresponding to $T_k$ in $G(v_2)$.

If $E(T_{k-1})\cap E(G(v_1)[S'])\neq\emptyset$ and $E(T_k)\cap
E(G(v_1)[S'])\neq\emptyset$. Then $T_{k-1}\cup T_{k}$ must have one
of the structures as shown in Figures $2a, \ 2b$ and $2c$ by
Remark~$2.2$. If $T_{k-1}$ and $T_{k}$ have the structures as shown
in Figure $2a$, then we can obtain trees $T_{k-1}^*,T_{k}^*$ and
$T_{k+1}^*$ as shown in Figure $2d$. If $T_{k-1}$ and $T_{k}$ have
the structures as shown in Figure $2b$, then we can obtain trees
$T_{k-1}^*,T_{k}^*$ and $T_{k+1}^*$ as shown in Figure $2e$. If
$T_{k-1}$ and $T_{k}$ have the structures as shown in Figure $2c$,
then we can obtain trees $T_{k-1}^*,T_{k}^*$ and $T_{k+1}^*$ similar
to those in Figure $2d$.

{\flushleft\textbf{Case 2.}}\quad $z'\in \{x,y\}$.

Without loss of generality, assume $z'=y$. Since
$\kappa(G)>\kappa_3(G)=k$, by Menger's Theorem, there exist at least
$k+1$ internally disjoint {\em x-y} paths $P^1,P^2,\ldots,P^{k+1}$.
Assume that $y_i$ is the only neighbor of $y$ in $P^i$, and that
$y_i'$ is the vertex corresponding to $y_i$ in $G(v_2)$. If $x$ and
$y$ are nonadjacent in $P^i$, let $T_{i}$ be the tree obtained from
$P^i$ by adding $y_iy_i'$ and $y_i'z$. If $x$ and $y$ are adjacent
in $P^i$, let $T_{i}$ be the tree obtained from $P^i$ by adding
$yz$. Since $G$ is a simple graph, there exists at most one path
$P^i$ such that $x$ and $y$ are adjacent on $P^i$. Thus $T_{i},1\leq
i \leq k+1,$ are $k+1$ internally disjoint $S$-trees.
\end{proof}

\begin{lemma} If $x,y,z$ are contained in distinct
$G(v_i)$s, then there exist $k+1$ internally disjoint $S$-trees.
\end{lemma}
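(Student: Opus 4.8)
The plan is to reduce the three-distinct-layer configuration to work carried out essentially inside the middle layer, and to manufacture the extra tree from the path direction. Since $S=\{x,y,z\}$ is unordered, I would first relabel the three vertices so that the layer containing the middle vertex is between the other two; that is, write $x=(u_x,v_a)$, $y=(u_y,v_b)$, $z=(u_z,v_c)$ with $a<b<c$. Let $x'=(u_x,v_b)$ and $z'=(u_z,v_b)$ be the vertices of the middle layer $G(v_b)$ corresponding to $x$ and $z$, and set $S'=\{x',y,z'\}$. The proof then splits according to how many of $u_x,u_y,u_z$ coincide, i.e.\ according to whether $x',y,z'$ are three distinct vertices of $G(v_b)$.

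In the generic case, where $u_x,u_y,u_z$ are pairwise distinct, I would start from $k$ internally disjoint $S'$-trees $T_1,\dots,T_k$ in $G(v_b)\cong G$, which exist because $\kappa_3(G)=k$, chosen by Lemma 2.1 so that all but at most two of them avoid $E(G(v_b)[S'])$. Each $T_i$ must be turned into an $S$-tree by replacing its terminal $x'$ by $x$ (in layer $v_a$) and its terminal $z'$ by $z$ (in layer $v_c$). Following the rerouting used in the proof of Lemma 3.3, for one distinguished tree I run $x'$ and $z'$ straight down and up their own columns to $x$ and $z$; for each remaining $T_i$ I instead detach $x'$ (resp.\ $z'$) and reconnect the tree to $x$ (resp.\ $z$) through the column of a neighbour of $x'$ (resp.\ $z'$) in $T_i$, which is legitimate because such a neighbour projects to a neighbour of $x$ in $G(v_a)$ and of $z$ in $G(v_c)$; moreover, since the chosen trees avoid $E(G(v_b)[S'])$, that neighbour is never $y$. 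Observation 2.1 guarantees that a terminal of degree $2$ forces $T_i$ to be a path, so the two rerouting patterns (terminal of degree $1$ versus degree $2$) are exactly those handled in Subcase 1.1 of Lemma 3.3. As the neighbours used are distinct across the $T_i$, the columns employed are internally disjoint, and this yields $k$ internally disjoint $S$-trees.

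The extra, $(k+1)$-st tree I would build along the column of $u_y$: take a path inside $G(v_a)$ from $x$ to $(u_y,v_a)$, follow the $u_y$-column from $(u_y,v_a)$ up through $y=(u_y,v_b)$ to $(u_y,v_c)$, and finish with a path inside $G(v_c)$ from $(u_y,v_c)$ to $z$. This is a single $x$--$z$ path through $y$, and its column meets each $T_i$ only at $y\in S$. The main obstacle is internal disjointness in the two outer layers: the rerouting above already occupies some neighbours of $x$ in $G(v_a)$ and some neighbours of $z$ in $G(v_c)$, and the two finishing paths of the extra tree must avoid them. This is precisely where the hypothesis $\kappa(G)>\kappa_3(G)=k$ enters: since $\kappa(G)\ge k+1$, Menger's Theorem provides $k+1$ internally disjoint $u_x$--$u_y$ paths in $G(v_a)$ and $k+1$ internally disjoint $u_z$--$u_y$ paths in $G(v_c)$, so after the at most $k$ rerouting channels are removed, at least one path survives in each outer layer to complete $T_{k+1}$. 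Coordinating these choices so that all $k+1$ trees are simultaneously internally disjoint is the technical heart of the argument.

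Finally I would dispose of the degenerate cases in which two or three of $u_x,u_y,u_z$ coincide, so that $x',y,z'$ are not all distinct. When, say, $u_x=u_y$, the vertex $x$ lies on the column through $y$, so $x$ and $y$ are already joined by a column path and only $z$ remains to be attached; likewise when $u_x=u_z$ or $u_y=u_z$, and when $u_x=u_y=u_z$ all three terminals lie on one column. In each of these situations the required trees are produced directly from the $k+1$ internally disjoint paths furnished by Menger's Theorem (again using $\kappa(G)\ge k+1$) within a single layer, in the spirit of Case 2 of Lemma 3.3, together with the relevant column edges. I expect these collision subcases to be routine once the generic case is in hand.
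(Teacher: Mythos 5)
Your overall architecture (project two of the vertices into one layer, convert the $k$ trees of $G\cong G(v_b)$ into $S$-trees by rerouting the terminals $x',z'$ through the columns of their tree-neighbours, and reserve Menger's Theorem for the degenerate cases where $x',y,z'$ collide) matches the paper, and your degenerate cases are exactly the paper's Cases 2 and 3. But your construction of the $(k+1)$-st tree contains a genuine quantitative gap. You claim that ``at most $k$ rerouting channels'' occupy the outer layer $G(v_a)$, so that one of the $k+1$ Menger paths from $x$ to $(u_y,v_a)$ survives. That count is wrong by your own earlier observation: when a terminal $x'$ has degree $2$ in $T_i$ (Observation $2.1$ allows this, $T_i$ then being a $y$--$z'$ path through $x'$), the rerouting must detach $x'$ and reconnect \emph{both} components, consuming \emph{two} neighbour columns, hence two occupied neighbours of $x$ in $G(v_a)$. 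Nothing bounds the number of such degree-$2$ trees by one, so up to $2(k-1)$ vertices of $G(v_a)$ may be blocked, while Menger supplies only $k+1$ internally disjoint $x$--$(u_y,v_a)$ paths and each blocked vertex can kill one of them; for $k\geq 3$ no surviving path is guaranteed, and the ``coordination'' you defer to is precisely where the argument breaks. The paper sidesteps this entirely: its extra tree is not a new Menger path but the \emph{copy} $T_k''$ of one of the $k$ trees translated into the third layer, attached by the columns of $x$ and $y$ (namely $T_{k+1}^*=T_k''+x''x'+x'x+y''y$, with $T_k^*=T_k+y'y+z'z''+z''z$). Its internal disjointness from all the rerouted trees is inherited for free from the internal disjointness of the originals --- the copy of $T_k$ cannot meet the copied neighbours $(z_i,v_3)$ because $z_i\notin V(T_k)$ --- so no counting and no extra connectivity are needed in the generic case; $\kappa(G)>\kappa_3(G)$ is used only when the projections coincide.

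A second, lesser omission: after invoking Lemma $2.1$ you only describe how to convert trees with $E(T_i)\cap E(G(v_b)[S'])=\emptyset$, plus one ``distinguished'' tree run straight along its own columns. But up to two trees may use edges of $G(v_b)[S']$ (Remark $2.2$, Figure $1e$), and these interact badly with your $u_y$-column tree: if, say, $yz'\in E(T_k)$, then the forced reconnection of $z'$ in $T_k$ would have to travel through the column of $u_y$, colliding with your extra tree at $(u_y,v_c)$. The paper devotes the constructions of Figures $3a$--$3f$ to exactly these configurations, and your proposal has no substitute for them.
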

\begin{proof}

We may assume that $x\in V(G(v_1)),y\in V(G(v_2)),z\in V(G(v_3))$.
In the following argument, we can see that this assumption has no
influence on the correctness of our proof. Let $y',z'$ be the
vertices corresponding to $y,z$ in $G(v_1)$, $x',z''$ be the
vertices corresponding to $x,z$ in $G(v_2)$ and $x'',y''$ be the
vertices corresponding to $x,y$ in $G(v_3)$. We consider the
following three cases.

{\flushleft\textbf{Case 1.}} $x,y',z'$ are distinct vertices in
$G(v_1)$

Let $S'=\{x,y',z'\}$, and $T_1,T_2,\ldots T_k$ be $k$ internally
disjoint $S'$-trees in $G(v_1)$ such that $|\{T_i\,|\,E(T_i)\cap
E(G(v_1)[S'])\neq\emptyset\}|$ is as small as possible. We can
assume that $E(T_i)\cap E(G(v_1)[S'])=\emptyset$ for each $i$, where
$1\leq i\leq k-2$ by Lemma~$2.1$. For each $T_i$ such that
$E(T_i)\cap E(G(v_1)[S'])=\emptyset$, we can obtain an $S$-tree
$T_i^*$ from $T_i$ similar to that in Subcase~$1.1$ of Lemma~$3.2$.

If $E(T_{k-1})\cap E(G(v_1)[S'])=\emptyset$ or $E(T_{k-1})\cap
E(G(v_1)[S'])=\emptyset$. Without loss of generality, we assume
$E(T_{k-1})\cap E(G(v_1)[S'])=\emptyset$. Let $T_k^*$ be the tree
obtained from $T_k$ by adding edges $y'y,z'z''$ and $z''z$,
$T_{k+1}^*$ be the tree obtained from $T_k''$ by adding $x''x',x'x$
and $y''y$, where $T_k''$ is the tree corresponding to $T_k$ in
$G(v_3)$. Thus, $T_i^*$s, $1\leq i\leq k+1$, are $k$ internally
disjoint $S$-tree.

Otherwise, that is, $E(T_{k-1})\cap E(G(v_1)[S'])\neq\emptyset$ and
$E(T_k)\cap E(G(v_1)[S'])\neq\emptyset$. Then $T_{k-1}$ and $T_{k}$
must have the structures as shown in Figures $3a, \ 3b$ and $3c$. If
$T_{k-1}$ and $T_{k}$ have the structures as shown in Figure~$3a$,
then we can obtain trees $T_{k-1}^*,T_{k}^*$ and $T_{k+1}^*$ as
shown in Figure~$3d$. If $T_{k-1}$ and $T_{k}$ have the structures
as shown in Figure~$3b$, then we can obtain trees
$T_{k-1}^*,T_{k}^*$ and $T_{k+1}^*$ as shown in Figure~$3e$. If
$T_{k-1}$ and $T_{k}$ have the structures as shown in Figure~$3c$,
then we can obtain trees $T_{k-1}^*,T_{k}^*$ and $T_{k+1}^*$ as
shown in Figure~$3f$.

\begin{figure}[h,t,b,p]
\begin{center}
\scalebox{0.7}[0.6]{\includegraphics{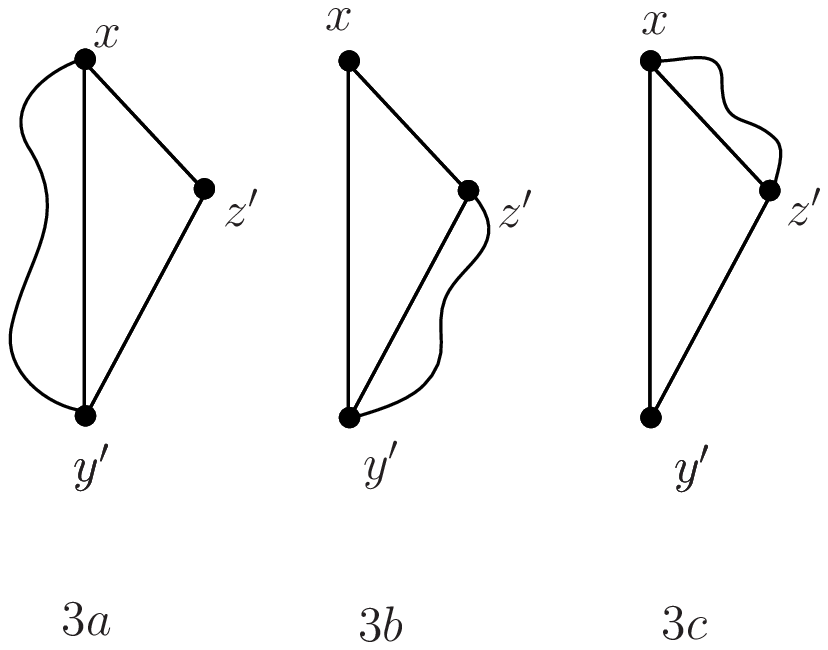}}
\scalebox{0.7}[0.6]{\includegraphics{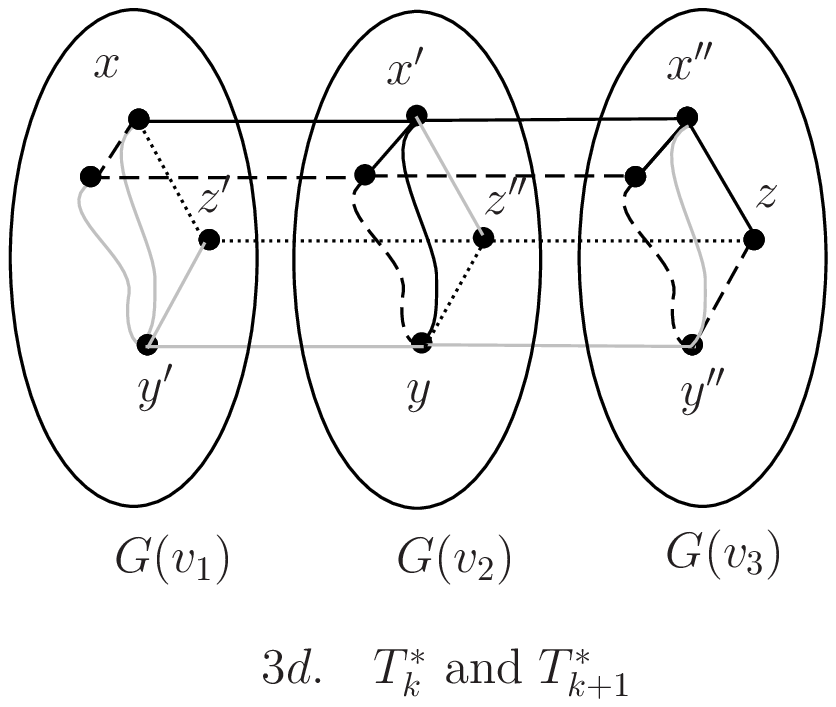}}
\scalebox{0.7}[0.6]{\includegraphics{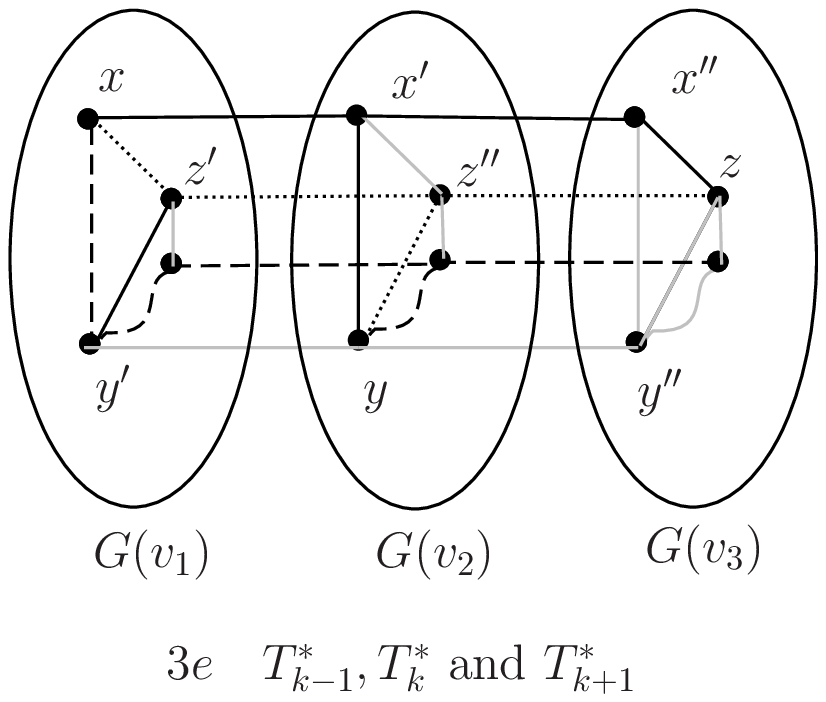}}
\scalebox{0.7}[0.6]{\includegraphics{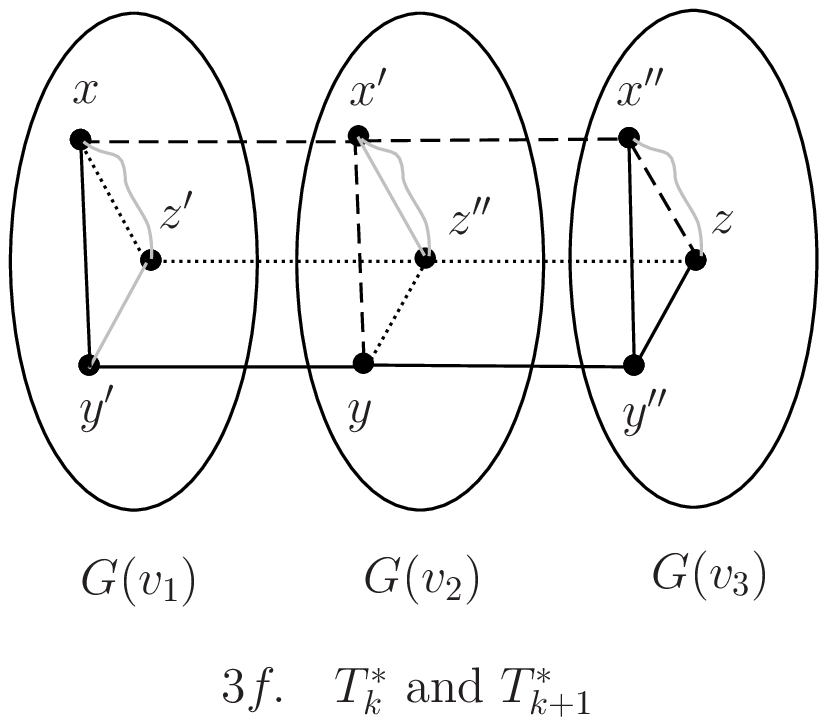}}
\scalebox{0.7}[0.6]{\includegraphics{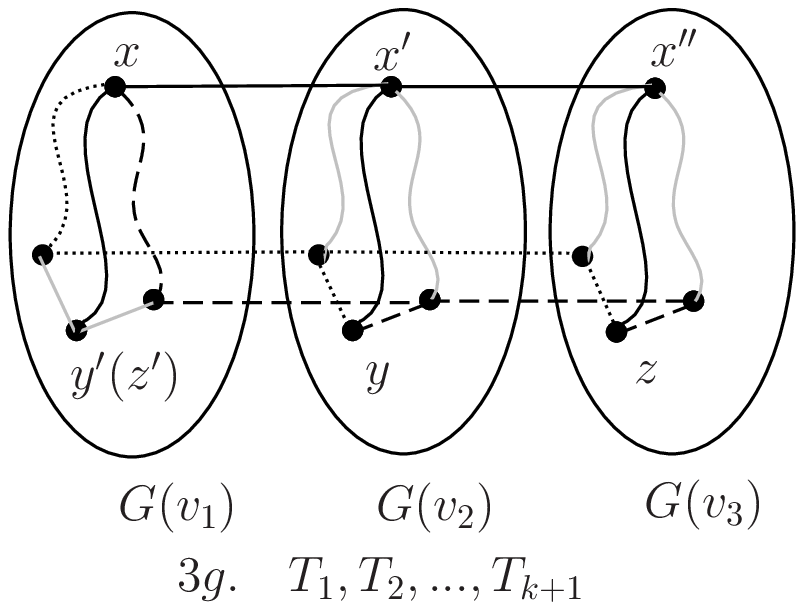}}
\scalebox{0.7}[0.6]{\includegraphics{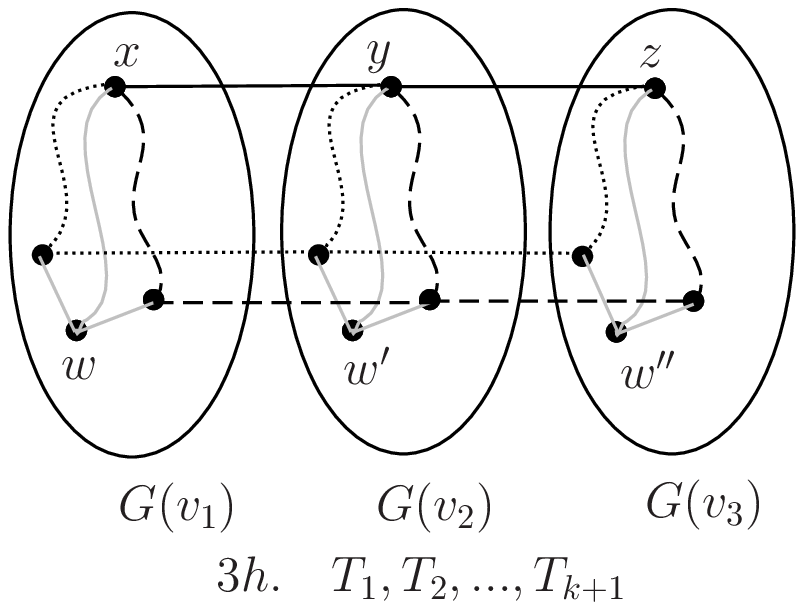}}

Figure~3. The edges (or paths) of a tree are shown by the same type
of lines.\\ The lightest lines stand for edges (or paths) not
contained in $T_i^*$.
\end{center}
\end{figure}

{\flushleft\textbf{Case 2.}} Two of $x, y',z'$ are the same vertex
in $G(v_1)$.

If $y'=z'$, since $\kappa(G)>\kappa_3(G)=k$, by Menger's Theorem, it
is easy to construct $k+1$ internally disjoint $S$-trees. See Figure
$3g$. The other cases ($x=y'$ or $x=z'$) can be proved with similar
arguments.

{\flushleft\textbf{Case 3.}} $x,y',z'$ are the same vertex in
$G(v_1)$.

Since $\kappa(G)>\kappa_3(G)=k$, by Menger's Theorem, it is easy to
construct $k+1$ internally disjoint $S$-trees. See Figure $3h$.
\end{proof}

We have the following observation by the argument in the proof of
Theorem~$3.1$.

\begin{observation} The $k+1$ internally disjoint $S$-trees
consist of three kinds of edges --- the edges of original trees (or
paths), the edges corresponding the edges of original trees (or
paths) and two-type edges.
\end{observation}

Note that $Q_{n}\cong P_{2}\Box P_{2}\Box\cdots\Box P_{2}$, where
$Q_n$ is the $n$-hypercube. We have the following corollary.

\begin{corollary} Let $Q_n$ be the $n$-hypercube with $n\geq 2$. Then
$\kappa_3(Q_n)=n-1$.
\end{corollary}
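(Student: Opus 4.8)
The plan is to proceed by induction on $n$, using the recursive structure $Q_n \cong Q_{n-1} \Box P_2$ together with Theorem~$3.1$ for the lower bound and Theorem~$1.5$ for the upper bound. First I would establish the base case $n=2$: since $Q_2 \cong C_4$, a direct check shows $\kappa_3(C_4)=1=n-1$. For the inductive step, I assume $\kappa_3(Q_{n-1})=n-2$ and note that $Q_n \cong Q_{n-1}\Box P_2$ by Proposition~$2.2$ (associativity of the Cartesian product) and the observation that $Q_n\cong P_2\Box\cdots\Box P_2$.

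For the \emph{lower bound}, I would invoke Theorem~$3.1$ with $G=Q_{n-1}$ and $P_m=P_2$. The key point is to determine which case of Theorem~$3.1$ applies, which requires knowing the relationship between $\kappa_3(Q_{n-1})$ and $\kappa(Q_{n-1})$. It is well known that $Q_{n-1}$ is $(n-1)$-regular and $(n-1)$-connected, so $\kappa(Q_{n-1})=n-1$, while by the inductive hypothesis $\kappa_3(Q_{n-1})=n-2$. Hence $1\leq \kappa_3(Q_{n-1})<\kappa(Q_{n-1})$, so case $(ii)$ of Theorem~$3.1$ gives
\[
\kappa_3(Q_n)=\kappa_3(Q_{n-1}\Box P_2)\geq \kappa_3(Q_{n-1})+1=(n-2)+1=n-1.
\]

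For the \emph{upper bound}, I would apply Theorem~$1.5$, which states $\kappa_3(G)\leq\kappa(G)$ for any connected graph $G$. Since $Q_n$ is $n$-regular and $n$-connected, $\kappa(Q_n)=n$, and moreover any two adjacent vertices of $Q_n$ both have minimum degree $\delta=n$, so Theorem~$1.4$ yields the sharper bound $\kappa_3(Q_n)\leq \delta-1=n-1$. Combining this with the lower bound gives $\kappa_3(Q_n)=n-1$, completing the induction.

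I expect the main subtlety to be verifying the hypotheses needed to select case $(ii)$ of Theorem~$3.1$ at each inductive stage, namely that $\kappa_3(Q_{n-1})<\kappa(Q_{n-1})$; this is exactly where the inductive hypothesis and the standard fact $\kappa(Q_{n-1})=n-1$ combine, so the induction is self-propagating. The base case and the upper bound via Theorem~$1.4$ are routine, so the heart of the argument is simply the correct bookkeeping of the two connectivity parameters across the induction.
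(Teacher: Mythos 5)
Your proof is correct and follows essentially the same route as the paper: induction on $n$ via $Q_n\cong Q_{n-1}\Box P_2$, case $(ii)$ of Theorem~3.1 for the lower bound, and the adjacent-minimum-degree bound for the upper bound. Your write-up is in fact slightly more careful than the paper's terse proof---you explicitly verify the hypothesis $1\leq\kappa_3(Q_{n-1})<\kappa(Q_{n-1})$ needed to invoke case $(ii)$, and you correctly attribute the upper bound $\kappa_3(Q_n)\leq n-1$ to Theorem~1.4 (the paper's citation of ``Theorem~1.3'' there is evidently a typo).
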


\begin{proof}
It is easy to check that $\kappa_3(Q_2)=1$. Assume that the result
holds for $\kappa_3(Q_{n-1}),n\geq 3$. By Theorem $3.1$,
$\kappa_3(Q_{n})\geq n-1$. On the other hand, since $Q_n$ is
$n$-regular, $\kappa_3(Q_{n})\leq n-1$ by Theorem $1.3$. Thus
$\kappa_3(Q_n)=n-1$.
\end{proof}

\begin{example}
Let $H_1$ and $H_2$ be two complete graphs of order $n$, and let
$V(H_1)=\{u_1,u_2,\ldots,u_n\}, V(H_2)=\{v_1,v_2,\ldots,v_n\}$. We
now construct a graph $G$ as follows:

\centerline{$V(G)=V(H_1)\cup V(H_2) \cup \{w\}$, where $w$ is a new
vertex;}

\centerline{$E(G)=E(H_1)\cup E(H_2)\cup \{u_iv_j\,|\, 1\leq i,j \leq
n\} \cup \{wu_i\,|\,1\leq i \leq n\}$}.

It is easy to check that $\kappa_3(G\Box K_2)=\kappa_3(G)=n$ by
Theorems $1.2$ and $1.5$.
\end{example}
\begin{remark}
We know that the bounds of $(i)$ and $(ii)$ in Theorem $3.1$ are
sharp by Example $3.1$ and Corollary~$3.1$.
\end{remark}

\section{The Cartesian product of a connected graph and a tree}

\begin{theorem} Let $G$ be a connected graph and $T$ be a tree.
We have the following result.

$(i)$ If $\kappa_3(G)=\kappa(G)\geq 1$, then $\kappa_3(G\Box T)\geq
\kappa_3(G)$. Moreover, the bound is sharp.

$(ii)$ If $1\leq \kappa_3(G)< \kappa(G)$, then $\kappa_3(G\Box
T)\geq \kappa_3(G)+1$. Moreover, the bound is sharp.
\end{theorem}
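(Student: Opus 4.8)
The plan is to reduce Theorem 4.1 to Theorem 3.1 by induction on the number of vertices of the tree $T$, using the associativity of the Cartesian product and the fact that a tree is built up from paths. First I would set up the induction on $|V(T)|$. The base case $|V(T)|=1$ is trivial ($G\Box K_1\cong G$), and if $T$ is a path then the statement is exactly Theorem 3.1. So the essential content is the inductive step, where I would like to strip off a leaf of $T$ and argue that attaching a pendant copy of $G$ does not decrease the $3$-connectivity below the claimed bound.

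The key steps, in order, would be as follows. Let $v$ be a leaf of $T$ with unique neighbor $u$, and write $T'=T-v$, so that $T$ is obtained from $T'$ by appending the edge $uv$. By induction I may assume the bound holds for $G\Box T'$. To prove the bound for $G\Box T$ I must, given any $S=\{x,y,z\}\subseteq V(G\Box T)$, produce the required number of internally disjoint $S$-trees. The natural case split mirrors that of Section 3: I would distinguish according to how many of $x,y,z$ lie in the pendant copy $G(v)$. If none lie in $G(v)$, then $S\subseteq V(G\Box T')$ and the trees guaranteed by the inductive hypothesis live entirely inside $G\Box T'$, so we are done immediately. If some of $x,y,z$ lie in $G(v)$, I would project those vertices along the two-type edge $uv$ into the corresponding copy $G(u)\subseteq G\Box T'$, obtaining a modified triple $S^{*}$ in $G\Box T'$, apply the inductive hypothesis there, and then lift the resulting $S^{*}$-trees back across the edge from $G(u)$ to $G(v)$ by the same surgery used in Lemmas 3.1--3.3 (adding two-type edges $uv$-copies and deleting the projected vertices). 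The point is that the pendant copy $G(v)$ together with the single connecting layer $G(u)$ behaves exactly like the two-layer configuration $G\Box P_2$ analyzed in Section 3, so the tree constructions there transfer essentially verbatim.

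I expect the main obstacle to be bookkeeping rather than a genuinely new idea: one must verify that when the projected vertices collide (e.g. two of $x,y,z$ project to the same vertex of $G(u)$, or a projected vertex coincides with an existing one), the reductions of Lemma 3.2 Case 2 and Lemma 3.3 Cases 2--3 still apply, and in particular that the Menger-type argument invoking $\kappa(G)>\kappa_3(G)$ carries over. A cleaner route, which I would actually favor, is to bypass the vertex-level surgery altogether and argue structurally: since every edge of $T$ lies on a path and $T$ is connected, any three vertices $x,y,z$ together with their projections are captured within a subproduct $G\Box P$ for some path $P$ of $T$ spanning the relevant leaves, and by Proposition 2.2 (associativity) and Theorem 3.1 applied to this sub-configuration one obtains the trees directly. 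The sharpness claim requires no new work: the same witnesses used for Theorem 3.1, namely the hypercube computation of Corollary 3.1 (taking $T$ a path) and the graph of Example 3.1, already realize equality in both $(i)$ and $(ii)$, since a path is a tree.
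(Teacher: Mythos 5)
Your proposal has a genuine gap, and it sits exactly where the paper's Section~4 does its real work. Your preferred ``cleaner route'' rests on the claim that any three vertices $x,y,z$ of $G\Box T$, lying in layers $G(v_i),G(v_j),G(v_k)$, are captured in a subproduct $G\Box P$ for some path $P$ of $T$. This is false whenever $v_i,v_j,v_k$ do not lie on a common path of $T$ --- take $T=K_{1,3}$ and the three vertices in the three leaf layers --- and that is precisely the only case the paper cannot dispatch by citing Theorem~3.1. (The paper says explicitly: if a path of $T$ contains $v_i,v_j,v_k$ we are done by Theorem~3.1; the remaining case is the whole content of Section~4.) The paper handles it by taking the minimal Steiner tree of $\{v_i,v_j,v_k\}$ in $T$ via Observation~2.1, which has a branch vertex $v_4$ of degree~$3$, projecting $x,y,z$ to $S'=\{x',y',z'\}$ in the \emph{single} layer $G(v_4)$, and extending $k$ internally disjoint $S'$-trees of that layer along the three arms of the Steiner tree (Lemmas~4.1--4.3, with the degenerate cases $x'=y'$, etc., handled by Menger's Theorem using $\kappa(G)>\kappa_3(G)$). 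Your route, as stated, proves nothing beyond Theorem~3.1 itself.

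The induction/leaf-stripping alternative also fails as described, because the lifting step is not sound. The $k+1$ trees supplied by the inductive hypothesis for $G\Box T'$ are unstructured: such a tree may reach the projected vertex $x^{*}=(a,u)$ through a two-type edge from another layer, i.e.\ via a neighbor $(a,w)$ with $uw\in E(T')$. The surgery of Lemmas~3.1--3.3 presupposes that every neighbor of the projected vertex lies in one $G$-layer, so that each neighbor has a corresponding copy in the adjacent layer; a two-type neighbor $(a,w)$ has no neighbor in the pendant layer $G(v)$ at all (the leaf $v$ is adjacent only to $u$), so there is no local reroute. Since $x^{*}\notin S$, at most one of the $k+1$ trees may retain $x^{*}$ as an internal vertex, so all the remaining trees must be rerouted --- and your proposal supplies no mechanism for this. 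The collision cases compound the problem: if $x^{*}=y$, then $S^{*}$ degenerates to a $2$-set and the Menger-type argument would have to be run in $G\Box T'$ rather than inside a single $G$-layer, again losing the structural control (trees confined to one layer, with well-defined corresponding vertices next door) on which all of the paper's constructions depend. Your sharpness discussion is fine --- Example~3.1 and Corollary~3.1 do serve, since a path is a tree --- but the core of the theorem, the branch-vertex case, is left unproved by both of your routes.
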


We shall prove Theorem $3.1$ by a series of lemmas. Since the proofs
of $(i)$ and $(ii)$ are similar, we only show $(ii)$. It suffices to
show that for any $S=\{x,y,z\}\subseteq G\Box H$, there exist $k+1$
internally disjoint $S$-trees. Set $\kappa_3(G)=k$,
$V(G)=\{u_1,u_2,\ldots,u_n\}$, and $V(T)=\{v_1,v_2,\ldots,v_m\}$.

Let $x\in V(G(v_i)),y\in V(G(v_j)),z\in V(G(v_k))$ be three distinct
vertices. If there exists a path in $T$ containing $v_i, v_j$ and
$v_k$, then we are done from Theorem $3.1$. If $i, j$ and $k$ are
not distinct integers, such a path must exist. Thus, suppose that
$i, j$ and $k$ are distinct integers, and that there exists no path
containing $v_i, v_j$ and $v_k$. By Observation $2.1$, there exists
a tree $T$ in $H$ such that $d_{T}(v_i)=d_{T}(v_j)=d_{T}(v_k)=1$ and
all the vertices of $V(T)\setminus \{v_i,v_j,v_k\}$ have degree 2
except for one vertex, say $v_4$ with $d_{T}(v_4)=3$. Without loss
of generality, we set $i=1,j=2,k=3$, $S'=\{x',y',z'\}$, where $x',
y'$ and $z'$ are the vertices corresponding to $x,y$ and $z$ in
$G(v_4)$, respectively. Furthermore, we assume $v_iv_4\in E(T)$,
where $1\leq i\leq 3$. In the following argument, we can see that
this assumption has no influence on the correctness of our proof. We
proceed our proof by the following three lemmas.

\begin{lemma} If $x', y'$ and $z'$ are three distinct
vertices, then there exist $k+1$ internally disjoint $S$-trees.
\end{lemma}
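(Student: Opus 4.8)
The plan is to imitate the strategy of Lemma~$3.3$ by pushing everything onto the branch layer $G(v_4)$, with one extra idea needed to manufacture the $(k+1)$-st tree. Since $\kappa_3(G)=k$ and $x',y',z'$ are three distinct vertices of $G(v_4)$, I would first take $k$ internally disjoint $S'$-trees $T_1,\ldots,T_k$ in $G(v_4)$, chosen so that the number of them meeting $E(G(v_4)[S'])$ is as small as possible; by Lemma~$2.1$ I may assume $E(T_i)\cap E(G(v_4)[S'])=\emptyset$ for $1\le i\le k-2$.

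The trees that avoid $E(G(v_4)[S'])$ are converted into $S$-trees by the detour device of Subcase~$1.1$ of Lemma~$3.2$, applied at all three terminals at once. Because $v_4$ is adjacent to each of $v_1,v_2,v_3$, the two-type edges $x'x$, $y'y$, $z'z$ are available, but each may be used only once; so for a generic $T_i$ I reroute at each terminal through a tree-internal neighbour — if $x_i$ is the neighbour of $x'$ in $T_i$ and $x_i^{(1)}$ is its copy in $G(v_1)$, I delete $x'$ and add the two-type edge $x_ix_i^{(1)}$ together with the one-type edge $x_i^{(1)}x$, and symmetrically push $y'$ into $G(v_2)$ and $z'$ into $G(v_3)$. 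Distinct trees use distinct neighbours and distinct copies, so the rerouted trees remain internally disjoint. The at most two remaining trees $T_{k-1},T_k$ meeting $E(G(v_4)[S'])$ fall, by Remark~$2.2$, into finitely many structural types, which I would clear by explicit local surgery exactly as in the figures accompanying Lemma~$3.3$, while reserving the direct edges $x'x,y'y,z'z$ for one distinguished tree $T_k^*$.

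The step I expect to be the real obstacle is the construction of the extra tree $T_{k+1}^*$, and this is exactly where the hypothesis $\kappa(G)>\kappa_3(G)$ must enter. In the path case of Lemma~$3.3$ one gets the $(k+1)$-st tree almost for free, by copying $T_k$ into the far layer and threading it back through the intermediate layer; that trick is unavailable here, since the three terminals lie in three different branches that communicate only through the single hub layer $G(v_4)$, and any copy of $T_k$ placed in one branch can reach the other two branches only by re-entering $V(T_k)\subseteq V(G(v_4))$, which is already occupied by $T_k^*$. Nor can the extra tree be built inside $G(v_4)$ alone, because $\kappa_3(G)=k$ forbids a $(k+1)$-st internally disjoint $S'$-tree there. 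The plan is therefore to invoke $\kappa(G)\ge k+1$ through Menger's Theorem to gain one unit of extra capacity in $G(v_4)$ and to route $T_{k+1}^*$ partly through $G(v_4)$ and partly through the three branch layers, using the room freed by the detours so as to skirt the occupied set $V(T_k)$ in the hub. Verifying that this hybrid tree is genuinely internally disjoint from $T_1^*,\ldots,T_k^*$, in particular in the congested layer $G(v_4)$, is the technical heart of the argument.
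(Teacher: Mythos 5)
The genuine gap is precisely the step you yourself flag as the heart of the argument: you never construct $T_{k+1}^*$, and the mechanism you propose for it would not work. Menger's theorem in $G(v_4)$ gives $k+1$ internally disjoint paths between one \emph{pair} of vertices; it provides no vertex capacity in $G(v_4)$ avoiding the interiors of $T_1,\ldots,T_k$, and none need exist: $\bigcup_{i=1}^k V(T_i)$ may well be all of $V(G(v_4))$, so after the detours the only freed hub vertices are the three deleted terminals $x',y',z'$ themselves. A hybrid tree routed ``partly through $G(v_4)$'' while skirting the occupied hub would in effect be a $(k+1)$-st internally disjoint $S'$-structure in $G$, which $\kappa_3(G)=k$ forbids, as you note. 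Tellingly, the paper does not invoke $\kappa(G)>\kappa_3(G)$ or Menger in this lemma at all; that hypothesis is consumed only in the companion lemmas where two or three of $x',y',z'$ coincide (Lemmas~4.2 and~4.3, as in Case~2 of Lemma~3.2 and Cases~2--3 of Lemma~3.3).

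Moreover, the copying trick you dismiss is exactly what succeeds, once the resources of $T_k$ are \emph{split between two trees} rather than all given to $T_k^*$. Your objection --- that a copy of $T_k$ in one branch can reach the other branches only through $V(T_k)\subseteq V(G(v_4))$, ``already occupied by $T_k^*$'' --- silently assumes $T_k^*$ must occupy $T_k$'s whole hub footprint; it need not, and in the paper's construction neither new tree uses any hub vertex outside $\{x',y',z'\}$. Writing $w^{(r)}$ for the copy in $G(v_r)$ of $w\in V(T_k)$ (so $x\in V(T_k^{(1)})$, $y\in V(T_k^{(2)})$, $z\in V(T_k^{(3)})$), one may take for instance $T_k^{*}=T_k^{(1)}+{y'}^{(1)}y'+y'y+{z'}^{(1)}z'+z'z$ and $T_{k+1}^{*}=T_k^{(2)}\cup T_k^{(3)}+{x'}^{(2)}x'+{x'}^{(3)}x'+x'x$: these meet only in $\{x,y,z\}$, their hub vertices $y',z'$ and $x'$ were deleted from $T_1^*,\ldots,T_{k-1}^*$, and their branch-layer vertices are copies of $V(T_k)$, hence disjoint from the detour vertices of the rerouted trees because $V(T_i)\cap V(T_k)=S'$. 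This, with the variant when some terminal has degree $2$ in $T_k$ and the surgery of Remark~2.2 when $T_{k-1},T_k$ meet $E(G(v_4)[S'])$, is the content of the paper's Figures~4a--4c, and it yields the count $k+1$ constructively. So your routine rerouting of $T_1,\ldots,T_{k-1}$ matches the paper, but the decisive $(k+1)$-st tree is missing from your proposal, and the route you sketch for it points in the wrong direction.
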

\begin{proof}
Let $T_1,T_2,\ldots T_k$ be $k$ internally disjoint $S'$-trees in
$G(v_4)$ such that $|\{T_i\,|\,E(T_i)\cap
E(G(v_4)[S'])\neq\emptyset\}|$ is as small as possible. We can
assume $E(T_i)\cap E(G(v_4)[S'])=\emptyset$ for $1\leq i\leq k-2$ by
Lemma $2.1$.

\begin{figure}[h,t,b,p]
\begin{center}
\scalebox{0.8}[0.7]{\includegraphics{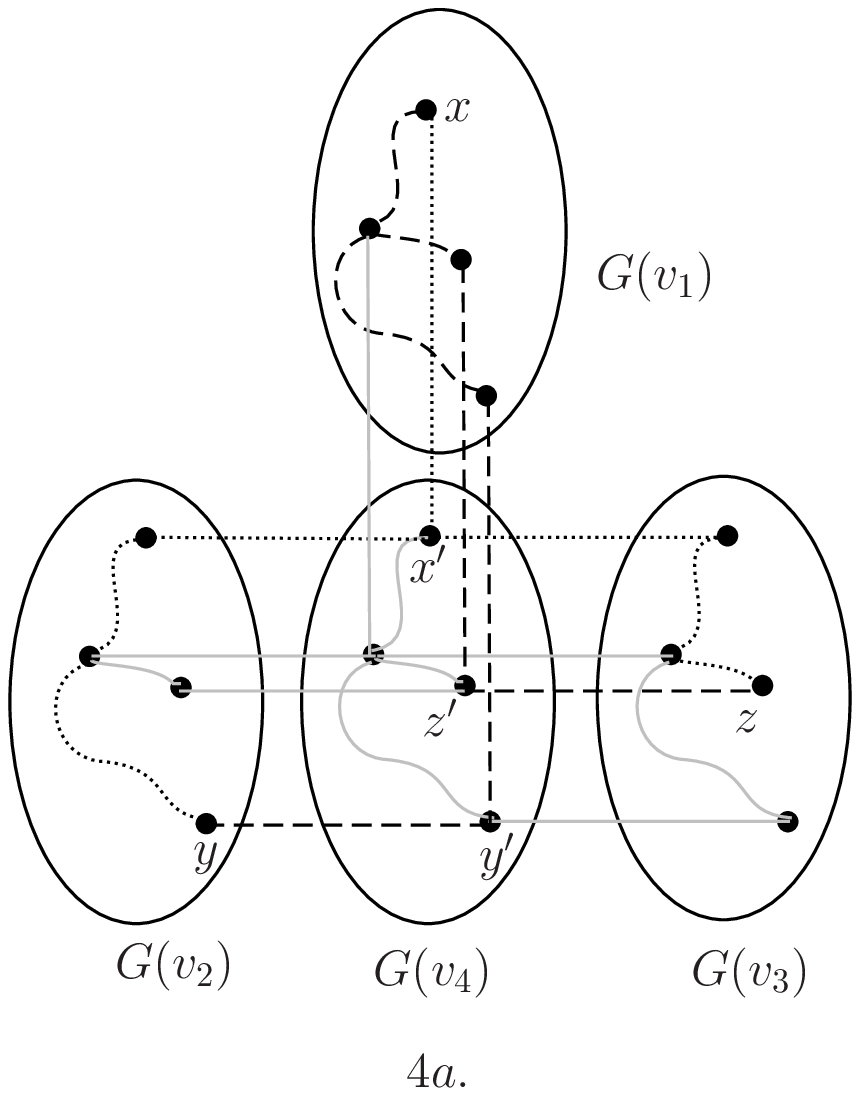}}
\scalebox{0.8}[0.7]{\includegraphics{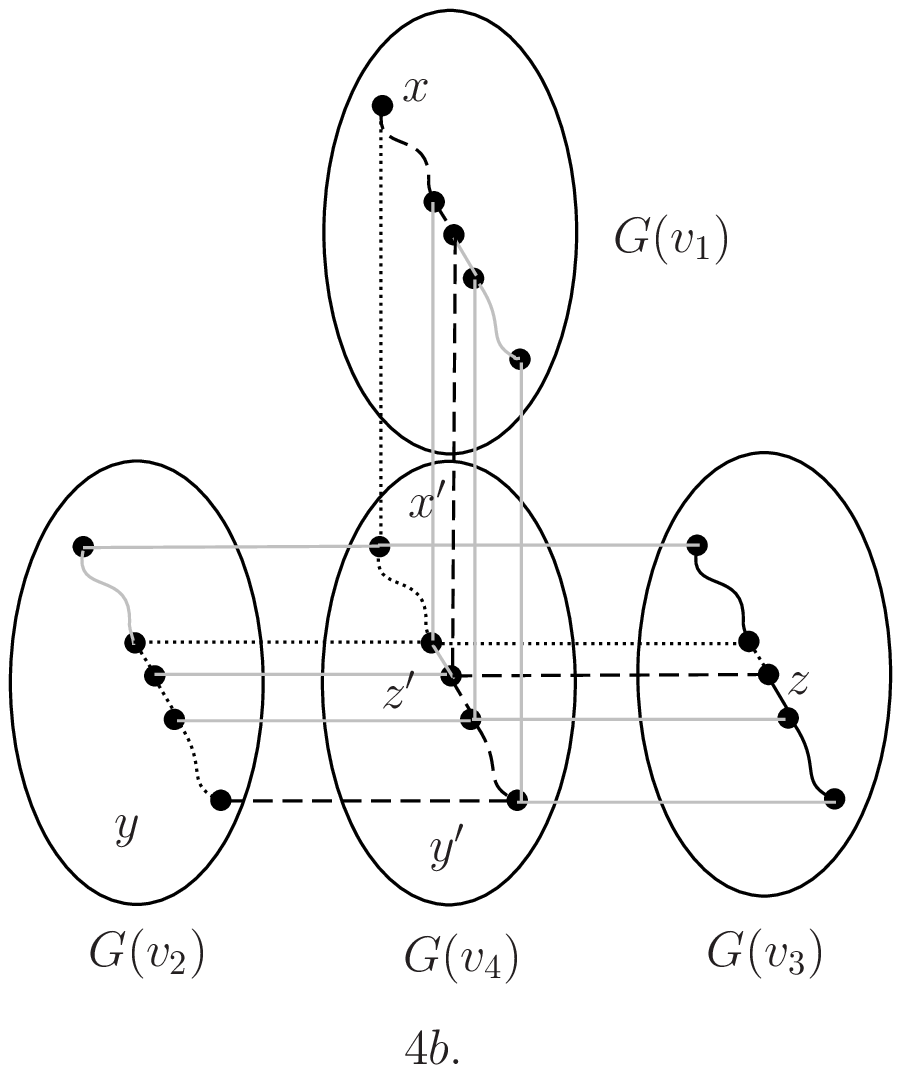}}
\scalebox{0.8}[0.7]{\includegraphics{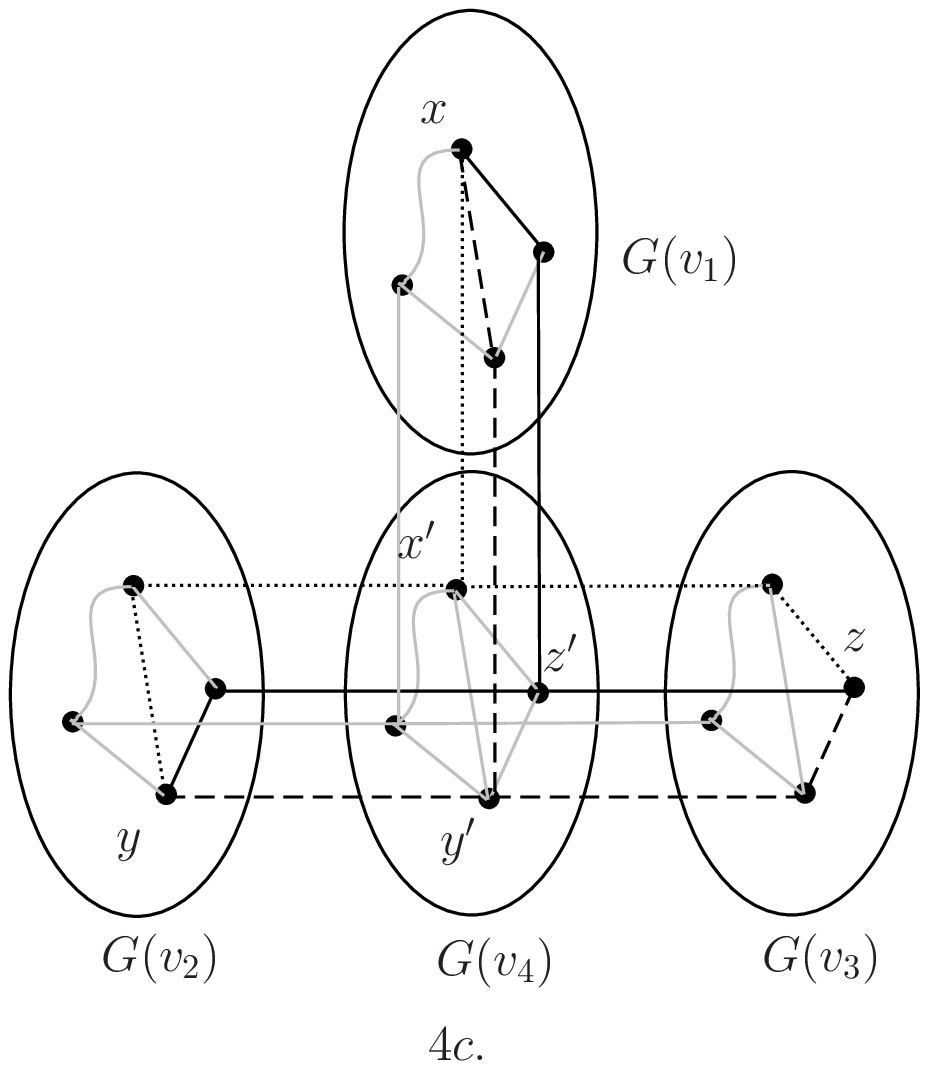}}

Figure~4. The edges (or paths) of a tree are shown by the same type
of lines.\\ The lightest lines stand for edges (or paths) not
contained in $T_i^*$.
\end{center}
\end{figure}

{\flushleft\textbf{Case 1.}} $E(T_i)\cap E(G(v_4)[S'])=\emptyset$
for each $i$, where $1\leq i\leq k$

Assume that $d_{T_i}(x')=d_{T_i}(y')=d_{T_i}(z')=1$ for $1\leq i\leq
k_1$ and set
$max\{d_{T_i}(x'),\linebreak[2]d_{T_i}(y'),d_{T_i}(z')\}=2$ for
$k_1+1\leq i\leq k$. If $k_1=k$, for $1\leq i\leq k-1$, assume that
$x', y'$ and $z'$ have neighbors $x_i,y_i,z_i$ in $T_i$,
respectively. (There maybe exist the same vertex in
$\{x_i,y_i,z_i\}$). Let ${T_i}^*$ be the tree obtained from $T_i$ by
adding edges $x_ix_i',x_i'x,y_iy_i',y_i'y,z_iz_i',z_i'z$ and
deleting $x',y',z'$, where $1\leq i \leq k-1$ and $x_i'\in
V(G(v_1)),y_i'\in V(G(v_2)),z_i'\in V(G(v_3))$ are the vertices
corresponding to $x_i,y_i,z_i$. Moreover, $T_k^*\cup T_{k+1}^*$ are
shown as in Figure~$4a$. Clearly, $T_1^*,T_2^*,\ldots,T_{k+1}^*$ are
$k+1$ internally disjoint $S$-trees.

Suppose $k_1\leq k-1$. For $1\leq i\leq k_1$, we construct $T_i^*$
the same as above. For $T_i$ with $k_1+1\leq i\leq k$, without loss
of generality, assume that
$d_{T_i}(z')=2,N_{T_i}(z')=\{z_{i,1},z_{i,2}\}, N_{T_i}(x')=\{x_i\},
N_{T_i}(y')=\{y_i\}$. Let ${T_i}^*$ be the tree obtained from $T_i$
by adding edges
$x_ix_i',x_i'x,y_iy_i',y_i'y,z_{i,1}z_{i,1}',z_{i,1}'z,z_{i,2}z_{i,2}',
z_{i,2}'z$ and deleting $x',y'z'$, where $k_1+1\leq i \leq k-1$ and
$z_{i,1}',z_{i,2}'\in V(G(v_3)),x_i'\in V(G(v_1)),y_i'\in V(G(v_2))$
are the vertices corresponding to $z_{i,1},z_{i,2},x_i,y_i$,
respectively. Moreover, $T_k^*$ and $T_{k+1}^*$ are as shown in
Figure~$4b$. Clearly, $T_1^*,T_2^*,\ldots,T_{k+1}^*$ are $k+1$
internally disjoint trees connecting $\{x,y,z\}$.

{\flushleft\textbf{Case 2.}} There exists some $T_i$ such that
$E(T_i)\cap E(G(v_4)[S'])\neq\emptyset$.

For a tree $T_i$ such that $E(T_i)\cap E(G(v_4)[S'])=\emptyset$, we
can construct ${T_i}^*$ similar to that in Case $1.1$.

If $E(T_{k-1})\cap E(G(v_4)[S'])=\emptyset$ and $E(T_k)\cap
E(G(v_4)[S'])\neq\emptyset$, $T_k^*$ and $T_{k+1}^*$ can be
constructed similar to those in Figure $4a.$ or $4b.$ (depending on
whether $d_{T_k}(x')=d_{T_k}(y')=d_{T_k}(z')=1$ or not).

If $E(T_{k-1})\cap E(G(v_4)[S'])\neq\emptyset$ and $E(T_k)\cap
E(G(v_4)[S'])\neq\emptyset$, then $T_{k-1}\cup T_{k}$ must have the
structures as shown in Figure $1e$. Without loss of generality, we
assume that $T_k$ is isomorphic to $P_3$ which has endpoints $x'$
and $y'$, and the only internal vertex $z'$. We can obtain trees
$T_{k-1}^*,T_{k}^*$ and $T_{k+1}^*$ as shown in Figure $4c$.
\end{proof}

\begin{lemma} If two of $x', y',z'$ are the same vertex in $G(v_4)$,
then there exist $k+1$ internally disjoint $S$-trees.
\end{lemma}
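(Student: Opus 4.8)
The plan is to reduce the problem to a Menger-path construction inside the single copy $G(v_4)$, in the spirit of Case~2 of Lemma~$3.2$, but now being forced to detach \emph{both} endpoints of each path rather than just one.

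First I would fix notation for the coincidence. Since two of $x',y',z'$ agree, and the three branches $v_1,v_2,v_3$ play symmetric roles at $v_4$, I may assume after relabeling that $y'=z'$; concretely this says that $y$ and $z$ have the same $G$-coordinate, while $x'\neq y'$. Because $\kappa(G)>\kappa_3(G)=k$, the copy $G(v_4)\cong G$ is $(k+1)$-connected, so by Menger's Theorem (Theorem~$2.3$) there exist $k+1$ internally disjoint $x'$-$y'$ paths $P^1,\dots,P^{k+1}$ in $G(v_4)$.

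The key difficulty, and the feature that distinguishes this lemma from Lemma~$3.2$, is that the common endpoints $x'$ and $y'$ now lie \emph{outside} $S$, so they may not be shared among the trees; I must peel both of them off every path. For a path $P^i$ of length at least $2$, let $x_i$ and $y_i$ be the neighbors of $x'$ and $y'$ on $P^i$, and let $Q^i=P^i-\{x',y'\}$ be the internal $x_i$-$y_i$ subpath. I would build $T_i$ from $Q^i$ by running, from the $x_i$ end, a two-type edge up to the vertex corresponding to $x_i$ in $G(v_1)$ followed by a one-type edge to $x$, and from the $y_i$ end two such two-step detours, reaching $y$ in $G(v_2)$ and $z$ in $G(v_3)$. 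Each of these one-type edges exists because $x_i$ is adjacent to $x'$ and $y_i$ is adjacent to $y'$ in $G(v_4)$, so the corresponding vertices in the branch copies are again adjacent. Each $T_i$ is then an $S$-tree avoiding $x'$ and $y'$.

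Finally I would dispose of the degenerate path and verify internal disjointness. Since $G$ is simple, at most one $P^i$ equals the single edge $x'y'$; for that one I keep $x',y'$ and take the tree $xx'\cup x'y'\cup y'y\cup y'z$ (recall $z'=y'$), which is the only tree using $x'$ or $y'$. For disjointness I note that the subpaths $Q^i$ are pairwise vertex-disjoint in $G(v_4)$ by Menger, while the attaching vertices (the copies of $x_i$ in $G(v_1)$ and of $y_i$ in $G(v_2)$ and $G(v_3)$) are pairwise distinct across $i$, since the $x_i$ and $y_i$ are, and they live in the copies $G(v_1),G(v_2),G(v_3)$, which are vertex-disjoint from $G(v_4)$ and from one another; moreover they differ from $x,y,z$ precisely because $x_i\neq x'$ and $y_i\neq y'$. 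Thus the only vertices shared by distinct trees are the terminals $x,y,z$, yielding $k+1$ internally disjoint $S$-trees. I expect the only fiddly point to be the bookkeeping for the short paths, namely length one (handled separately above) and length two, where $Q^i$ collapses to a single vertex that then serves as the common branch point for all three detours.
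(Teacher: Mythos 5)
Your proposal takes the same route as the paper's proof: assume $y'=z'$ by the symmetry of the three branches, use $\kappa(G)>\kappa_3(G)=k$ together with Menger's Theorem to obtain $k+1$ internally disjoint $x'$-$y'$ paths in $G(v_4)$, convert each path into an $S$-tree by rerouting its ends into the neighboring copies $G(v_1),G(v_2),G(v_3)$, and treat the (at most one) length-one path $x'y'$ separately. In fact, your writeup is more careful than the paper's at exactly the two delicate points. First, the paper's construction deletes only $y'$ from each $P^i$ and never says how $x$ is reached; as literally written, every tree would retain the endpoint $x'\notin S$, contradicting $V(T_i)\cap V(T_j)=S$. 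Your observation that, unlike in Case~2 of Lemma~3.2 (where one Menger endpoint was the terminal $y\in S$), here \emph{both} endpoints lie outside $S$ and must be peeled off --- attaching $x$ through the copy of $x_i$ in $G(v_1)$ --- is precisely the repair the paper's argument needs. Second, the paper adds the edge $y_i'y_i''$ joining the copies of $y_i$ in $G(v_2)$ and $G(v_3)$; since $v_2v_4,v_3v_4\in E(T)$ force $v_2v_3\notin E(T)$, this is not an edge of $G\Box T$, whereas your version branches both detours off $y_i$ itself inside $G(v_4)$ and uses only genuine one-type and two-type edges. Your bookkeeping for the degenerate lengths (the single edge-path, kept with $x',y'$ as its private interior, and length-two paths where $Q^i$ collapses to one branch vertex) and your disjointness check are complete and correct. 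So the proposal is correct, same in spirit as the paper, and actually fixes defects in the paper's written proof of this lemma.
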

\begin{proof}
Without loss of generality, assume $y'=z'$. Since
$\kappa(G)>\kappa_3(G)=k$, by Menger's Theorem, there exist at least
$k+1$ internally disjoint {\em $x'$-$y'$ paths}
$P^1,P^2,\ldots,P^{k+1}$ in $G(v_4)$.

Assume that $y_i$ are the only neighbor of $y'$ in $T_i$, and $y_i'$
and $y_i''$ are the vertices corresponding to $y_i$ in $V(G(v_2))$
and $V(G(v_3))$, respectively, where $1\leq i \leq k+1$.

If $x'$ and $y'$ are nonadjacent, let $T_{i}$ be a tree obtained
from $P^i$ by adding $y_iy_i',y_i'y,\linebreak[3]y_i'y_i'',y_i''z$
and deleting $y'$; If $x'$ and $y'$ are adjacent, let $T_{i}$ be a
tree obtained from $P^i$ by adding $y_iy_i',y_i'y,y_i'y_i'',y_i''z$.
Since $G$ is a simple graph, there exists at most one path $P^i$
such that $x$ and $y'$ are adjacent on $P^i$. Thus,
$T_1,T_2,\ldots,T_{k+1}$ are $k+1$ internally disjoint $S$-trees.
\end{proof}

\begin{lemma} If $x',y',z'$ are the same vertex in $G(v_4)$,
then there exist $k+1$ internally disjoint $S$-trees.
\end{lemma}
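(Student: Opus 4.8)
The plan is to handle the final case, where $x'$, $y'$ and $z'$ coincide at a single vertex $w$ in $G(v_4)$. This is the most degenerate situation: all three of the designated vertices $x$, $y$, $z$ project to the same vertex $w$ of $G$, living in three distinct layers $G(v_1)$, $G(v_2)$, $G(v_3)$, each adjacent to $G(v_4)$ through a two-type edge incident to $w$. Since $\kappa(G)>\kappa_3(G)=k$, I would invoke Menger's Theorem as in Lemmas~$4.2$ and $3.2$, Case~$2$ — but here there is no pair to connect by paths, since all three vertices collapse to $w$. Instead, since $\kappa(G)=\kappa(w,w)$ is not meaningful, I would use the degree condition: because $\kappa(G)\geq k+1$, the vertex $w$ has at least $k+1$ neighbors in $G(v_4)$, and more usefully, for any other vertex there are $k+1$ internally disjoint paths from $w$.

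\textbf{Constructing the trees.}
First I would fix $w=x'=y'=z'\in V(G(v_4))$ and let $w_1\in V(G(v_1))$, $w_2\in V(G(v_2))$, $w_3\in V(G(v_3))$ be the vertices corresponding to $w$; note $x=w_1$, $y=w_2$, $z=w_3$ by construction, and the two-type edges $w w_1$, $w w_2$, $w w_3$ all exist since $v_iv_4\in E(T)$ for $1\leq i\leq 3$. The key idea is to take $k+1$ internally disjoint stars (or short trees) centered near $w$ inside $G(v_4)$. Concretely, since $\kappa(G)\geq k+1$, the vertex $w$ has degree at least $k+1$ in $G(v_4)$; let $a_1,a_2,\ldots,a_{k+1}$ be $k+1$ distinct neighbors of $w$ in $G(v_4)$. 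For each $i$ with $1\leq i\leq k+1$, let $a_i^{(1)},a_i^{(2)},a_i^{(3)}$ be the vertices corresponding to $a_i$ in $G(v_1),G(v_2),G(v_3)$. I would then build $T_i$ as the tree consisting of the two-type edges $w a_i$ together with $a_i a_i^{(1)}$, $a_i^{(1)} x$, $a_i a_i^{(2)}$, $a_i^{(2)} y$, $a_i a_i^{(3)}$, $a_i^{(3)} z$ — that is, route from each of $x,y,z$ up to its own copy of $a_i$ and then into $G(v_4)$ at $a_i$, using $w$ only as a branch vertex. One of these trees can instead use the direct two-type edges $w x$, $w y$, $w z$, exactly as in Figure~$3h$ referenced in Lemma~$3.3$, Case~$3$.

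\textbf{Verifying disjointness and the main obstacle.}
I would then check that the $T_i$ are internally disjoint $S$-trees: each $T_i$ meets $S=\{x,y,z\}$ exactly in $S$, and for $i\neq j$ the only potential overlap is at $w$ and along the layers $G(v_1),G(v_2),G(v_3)$. Because the $a_i$ are distinct, their corresponding copies $a_i^{(\ell)}$ are distinct across $i$, so the one-type and two-type edges used in $G(v_\ell)$ for distinct $i$ are disjoint; the only shared vertex is $w$, which is internal to each tree and hence permitted only if $w\notin S$, which holds. The main obstacle I anticipate is the branch vertex $w$: every $T_i$ wants to pass through $w$, so I cannot use $w$ freely in more than one tree. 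The clean fix is to \emph{not} route through $w$ at all for $i\geq 2$ — instead connect $a_i$ to $x,y,z$ purely through the layer copies $a_i^{(\ell)}$ and the corresponding structure of a spanning tree of $G(v_4)$ reaching $a_i$, reserving $w$ as an internal branch vertex for only one tree. This forces me to use the full strength of $\kappa(G)\geq k+1$ to find $k+1$ internally disjoint $w$-rooted structures in $G(v_4)$ (via Menger from $w$ to a common sink, or a fan), rather than merely $k+1$ neighbors. I expect the bookkeeping of these internally disjoint subtrees of $G(v_4)$, and confirming that exactly $k+1$ can be produced, to be the delicate part, and I would present the resulting configuration with a figure analogous to Figure~$3h$.
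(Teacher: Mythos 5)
Your final construction is correct and is, at heart, the same as the paper's: both proofs route each tree through a distinct neighbor of the common vertex $w=x'=y'=z'$ in $G(v_4)$ and then branch into the three adjacent layers $G(v_1),G(v_2),G(v_3)$ via the copies of that neighbor, using the one-type edges to $x,y,z$ that exist because the neighbor is adjacent to $w$ in $G$. The differences are modest: the paper harvests its $k+1$ distinct neighbors as the first vertices of $k+1$ internally disjoint Menger paths from the common vertex to an auxiliary vertex, whereas you obtain them more elementarily from $\delta(G)\geq\kappa(G)\geq k+1$; and you reserve $w$ itself for one extra tree, the direct star with edges $wx,wy,wz$, so that $k$ distinct neighbors already suffice. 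Two cautions on your write-up. First, your verification paragraph briefly asserts that all the trees may share $w$ as an internal vertex (``permitted only if $w\notin S$''); that is a misreading of the definition, since $V(T_i)\cap V(T_j)=S$ forbids any shared vertex outside $S$ --- but you catch this yourself, and your fix is exactly right. Second, your closing worry is unfounded: once $w$ is excluded from the neighbor-trees, each such $T_i$ meets $G(v_4)$ in the single vertex $a_i$, so the $k$ neighbor-trees together with the star at $w$ are already $k+1$ internally disjoint $S$-trees; no Menger fan, common sink, or spanning-tree bookkeeping inside $G(v_4)$ is needed, and the ``delicate part'' you defer is in fact vacuous. In this respect your cleaned-up version is tighter than the paper's own printed proof, which contains slips (the listed edge $x_i'x_i''$ joins $G(v_2)$ to $G(v_3)$, not an edge of $G\Box T$ since $v_2v_3\notin E(T)$ --- it should go through $x_i$ --- and the branch reaching $x$ in $G(v_1)$ is omitted), though its intended construction coincides with yours.
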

\begin{proof}
Pick $w\in V(G(v_4))$ such that $x', w$ are distinct vertices in
$G(v_4)$. Since $\kappa(G)>\kappa_3(G)=k$, by Menger's Theorem,
there exist at least $k+1$ internally disjoint {\em $x'$-$w$ paths}
$P^1,P^2,\ldots,P^{k+1}$. Let $T_{i}$ be a tree obtained from $P^i$
by adding $x_ix_i',x_i'y,x_i'x_i'',x_i''z$ and deleting $y'$, where
$1\leq i \leq k+1$, $x_i$ is the only neighbor of $x$ in $P^i$, and
$x_i'$ and $x_i''$ are the vertices corresponding to $x_i$ in
$V(G(v_2))$ and $V(G(v_3))$, respectively. Clearly,
$T_1,T_2,\ldots,T_{k+1}$ are $k+1$ internally disjoint $S$-trees.
\end{proof}

\begin{remark}
We know that the bounds of $(i)$ and $(ii)$ in Theorem $3.1$ are
sharp by Example $3.1$ and Corollary~$3.1$.
\end{remark}

\begin{observation} The $k+1$ internally disjoint $S$-trees
consist of three kinds of edges --- the edges of original trees (or
paths), the edges corresponding the edges of original trees (or
paths) and two-type edges.
\end{observation}

\section{The Cartesian product of two general graphs}

\begin{observation}
Let $G$ and $H$ be two connected graphs, $x,y,z$ be three distinct
vertices in $H$, and $T_1,T_2,\ldots,T_k$ be $k$ internally disjoint
$\{x,y,z\}$-trees in $H$. Then $G\Box \bigcup_{i=1}^k
T_i=\bigcup_{i=1}^k (G\Box T_i)$ has the structure as shown in
Figure $5$. Moreover, $(G\Box T_i)\cap (G\Box T_j)=G(x)\cup G(y)\cup
G(z)$ for $i\neq j$. In order to show the structure of
$G\Box\bigcup_{i=1}^k T_i$ clearly, we take $k$ copies of $G(y)$,
and $k$ copies of $G(z)$. Note that, these $k$ copes of $G(y)$
(resp. $G(z)$) represent the same graph.
\end{observation}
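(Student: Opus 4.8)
The plan is to verify the two asserted identities at the level of vertex sets and edge sets separately, since two subgraphs of $G\Box H$ coincide exactly when they share the same vertices and the same edges. The only inputs I expect to need are the definition of the Cartesian product, which splits every edge into a one-type edge inherited from $E(G)$ and a two-type edge inherited from $E(H)$, together with the defining properties of internally disjoint $\{x,y,z\}$-trees, namely $V(T_i)\cap V(T_j)=\{x,y,z\}$ and $E(T_i)\cap E(T_j)=\emptyset$ for $i\neq j$.

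For the identity $G\Box\bigcup_{i=1}^k T_i=\bigcup_{i=1}^k(G\Box T_i)$, I would first handle the vertices. Writing $H'=\bigcup_{i=1}^k T_i$, we have $V(H')=\bigcup_i V(T_i)$, so $V(G\Box H')=V(G)\times\bigcup_i V(T_i)=\bigcup_i(V(G)\times V(T_i))=\bigcup_i V(G\Box T_i)$. For the edges I would classify them by type: the one-type edges of $G\Box H'$ are $\{(u,v)(u',v)\,:\,uu'\in E(G),\ v\in V(H')\}$, and since $V(H')=\bigcup_i V(T_i)$ these are precisely the union over $i$ of the one-type edges of $G\Box T_i$; likewise the two-type edges of $G\Box H'$ use exactly the edges of $E(H')=\bigcup_i E(T_i)$, and so are the union over $i$ of the two-type edges of $G\Box T_i$. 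This yields the equality of edge sets and hence of the two subgraphs.

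For the intersection formula I would again split by vertices and edges. At the vertex level, $V(G\Box T_i)\cap V(G\Box T_j)=V(G)\times(V(T_i)\cap V(T_j))=V(G)\times\{x,y,z\}$, which is exactly $V(G(x))\cup V(G(y))\cup V(G(z))$. At the edge level, a two-type edge of $G\Box T_i$ uses an edge of $T_i$ while a two-type edge of $G\Box T_j$ uses an edge of $T_j$, so since $E(T_i)\cap E(T_j)=\emptyset$ no two-type edge lies in both; a one-type edge lies in both $G\Box T_i$ and $G\Box T_j$ if and only if its fixed second coordinate lies in $V(T_i)\cap V(T_j)=\{x,y,z\}$, that is, if and only if it is an edge of $G(x)\cup G(y)\cup G(z)$. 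Since a one-type edge and a two-type edge can never coincide, the edge intersection is exactly $E(G(x))\cup E(G(y))\cup E(G(z))$, and combining with the vertex computation gives $(G\Box T_i)\cap(G\Box T_j)=G(x)\cup G(y)\cup G(z)$.

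The argument is essentially routine bookkeeping, and the only point that needs care is the edge analysis in both parts: one must keep one-type and two-type edges separate and remember that internal disjointness gives $E(T_i)\cap E(T_j)=\emptyset$ (killing the two-type overlaps) while the vertex sets meet only in $\{x,y,z\}$ (restricting the one-type overlaps to the three columns). I do not expect a genuine obstacle here; the statement is really a structural lemma recording how $G\Box(\cdot)$ distributes over a union of internally disjoint $S$-trees, and Figure~$5$ simply depicts the resulting $k$-fold ``book'' glued along the three copies $G(x),G(y),G(z)$.
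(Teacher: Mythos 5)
Your proposal is correct, and it is essentially the argument the paper relies on: the paper states Observation~$5.1$ without proof, treating it as immediate from the definition of the Cartesian product and Figure~$5$, and your vertex/edge bookkeeping (splitting into one-type and two-type edges, using $E(T_i)\cap E(T_j)=\emptyset$ to kill two-type overlaps and $V(T_i)\cap V(T_j)=\{x,y,z\}$ to confine one-type overlaps to the columns $G(x)\cup G(y)\cup G(z)$) is exactly the routine verification being left implicit. Nothing in your writeup deviates from or adds beyond what the paper's figure-based presentation presupposes.
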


\begin{figure}[h,t,b,p]
\begin{center}
\scalebox{0.6}[0.6]{\includegraphics{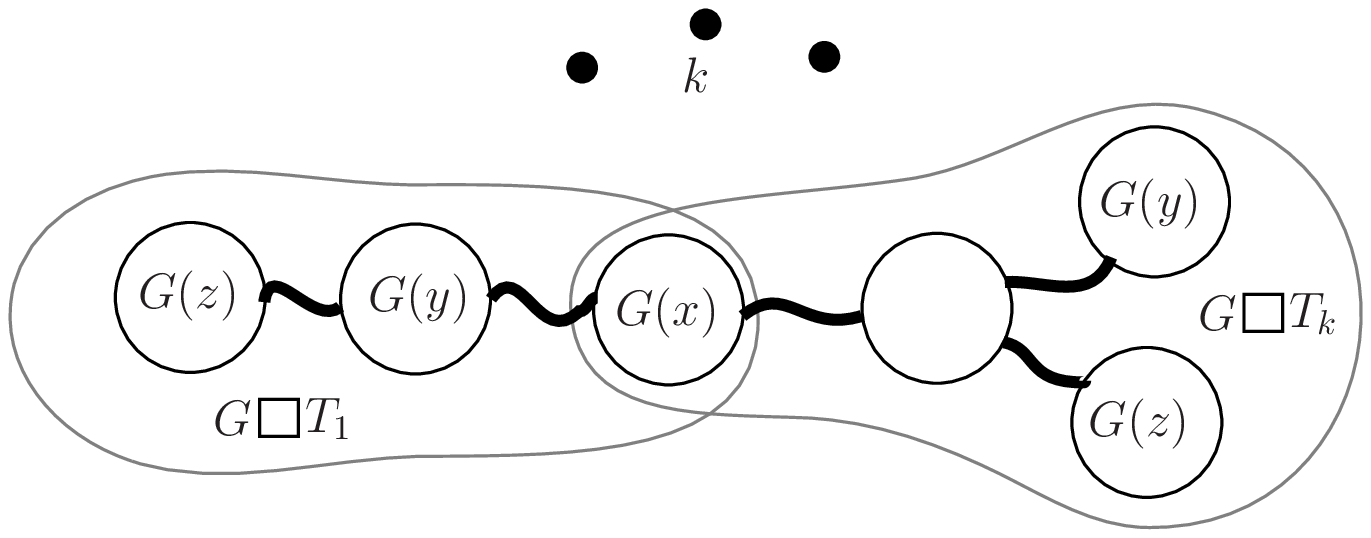}}

Figure~5. The structure of $G\Box\bigcup_{i=1}^k T_i$.
\end{center}
\end{figure}

\noindent{\bf Example 5.1.} Let $H$ be the complete graph with order
4. The structure of $G\Box (T_1 \cup T_2)$ are shown in Figure~$6$.

\begin{figure}[h]
\begin{center}
\scalebox{0.7}[0.7]{\includegraphics{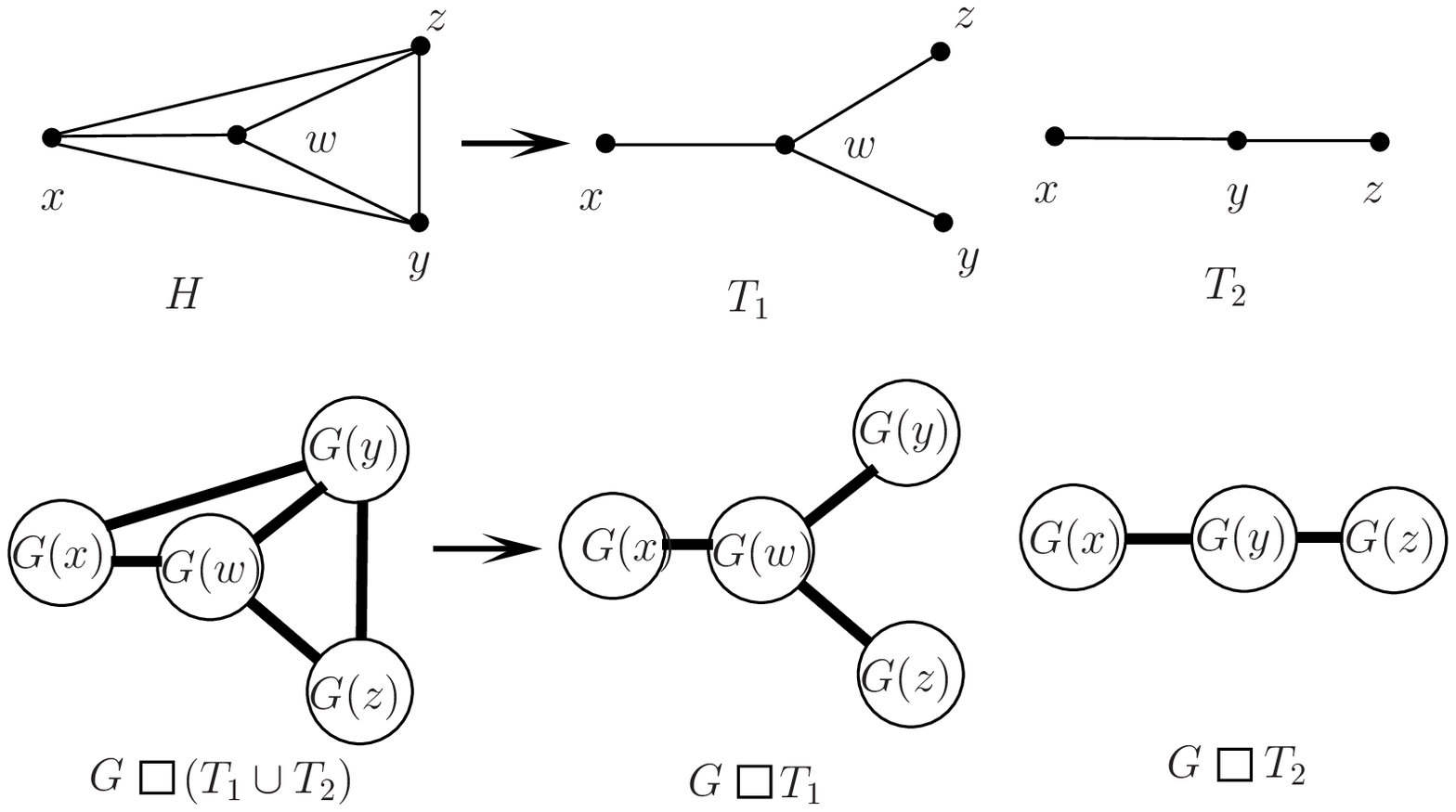}}

Figure~$6$. The structure of $G\Box (T_1\cup T_2)$.
\end{center}
\end{figure}

\begin{theorem} Let $G$ and $H$ be connected graphs such that
$\kappa_3(G)\geq\kappa_3(H)$.

$(i)$ If $\kappa(G)>\kappa_3(G)$, then $\kappa_3(G\Box H)\geq
\kappa_3(G)+\kappa_3(H)$. Moreover, the bound is sharp.

$(ii)$ If $\kappa(G)=\kappa_3(G)$, then $\kappa_3(G\Box H)\geq
\kappa_3(G)+\kappa_3(H)-1$. Moreover, the bound is sharp.

\end{theorem}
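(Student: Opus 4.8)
The plan is to set $k=\kappa_3(G)$ and $\ell=\kappa_3(H)$ (so $k\geq\ell$), fix an arbitrary $S=\{x,y,z\}\subseteq V(G\Box H)$, and produce the required number of internally disjoint $S$-trees by a case analysis on the number of distinct $H$-coordinates of $x,y,z$. Writing $v_1,v_2,v_3$ for these coordinates, the three cases are: all three equal, exactly two equal, and all distinct. In each case I would build one \emph{spine} family of trees living inside a single product $G\Box(\text{path or tree})$, handled by the results of Sections~3 and~4, together with several \emph{auxiliary} trees obtained by lifting extra path/tree structure from $H$.

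For the all-distinct case I would take $\ell$ internally disjoint $\{v_1,v_2,v_3\}$-trees $\mathcal{A}_1,\dots,\mathcal{A}_\ell$ in $H$ (these exist since $\kappa_3(H)=\ell$). Using $\mathcal{A}_1$ as a spine, $G\Box\mathcal{A}_1$ is the product of $G$ with a tree, so Theorem~4.1 yields $k+1$ internally disjoint $S$-trees when $\kappa(G)>\kappa_3(G)$ and $k$ of them when $\kappa(G)=\kappa_3(G)$; this is exactly the source of the split between $(i)$ and $(ii)$. Each remaining $\mathcal{A}_j$ ($2\leq j\leq\ell$) should contribute one further $S$-tree, obtained by routing $x,y,z$ along $\mathcal{A}_j$ via two-type edges and joining them by a Steiner tree inside an internal copy of $G$. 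By Observation~5.1 the subgraphs $G\Box\mathcal{A}_j$ meet only in the central copies $G(v_1)\cup G(v_2)\cup G(v_3)$, so the entire disjointness question collapses to what happens inside those three copies, and the counts $(k{+}1)+(\ell{-}1)=k+\ell$ and $k+(\ell{-}1)=k+\ell-1$ match the statement. The two-equal case is analogous, using $\ell$ internally disjoint $v_1$-$v_2$ paths (these exist since $\kappa(H)\geq\kappa_3(H)=\ell$ by Theorem~1.5 and Menger's Theorem) with $G\Box(\text{one path})$ as spine via Theorem~3.1; the all-equal case is easiest, giving $k$ trees inside $G(v_1)$ plus $\ell$ trees through $\ell$ distinct $H$-neighbours of $v_1$ (which exist since $\delta(H)\geq\kappa(H)\geq\ell$), for a total of $k+\ell$ with no interaction at all.

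The hard part will be making the $\ell-1$ auxiliary trees internally disjoint from the spine family and from one another \emph{inside the central copies}, since $x,y,z$ are each pinned to a single central copy and every tree must leave them either by a two-type edge or by an edge of that copy. When a lifted $H$-tree has an internal branch vertex (or a lifted $H$-path has an internal vertex), the three routes can be merged in a non-central copy of $G$, so such an auxiliary tree leaves $x,y,z$ only through two-type edges and uses no edge of the central copies, causing no conflict. The genuine difficulty is the degenerate lifts, where a minimal $H$-tree is a path whose middle vertex is one of $v_1,v_2,v_3$, or a $v_1$-$v_2$ path is a single edge, forcing the merge to occur inside a central copy. I would deal with these by choosing the spine to absorb the most constrained lift and by organising the routes at the pinned vertex (say $y$ in $G(v_2)\cong G$) as internally disjoint paths of $G$; this is precisely where the surplus guaranteed by $\kappa(G)>\kappa_3(G)$ buys the extra tree, and where its absence costs exactly one, explaining the $-1$ in $(ii)$.

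Finally, for sharpness I would exhibit, for each of $(i)$ and $(ii)$, a pair $G,H$ attaining equality, in the spirit of Example~3.1 (products of complete-graph-based gadgets whose $\kappa_3$ and $\kappa$ are read off from Theorems~1.2 and~1.5); the corollary $\kappa_3(Q_n)=n-1$ already illustrates the mechanism that makes such bounds tight.
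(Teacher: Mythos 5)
Your skeleton is, at bottom, the paper's: the same trichotomy on the $H$-coordinates of $x,y,z$, the same $\ell$ internally disjoint $\{v_1,v_2,v_3\}$-trees in $H$ supplied by $\kappa_3(H)=\ell$, Observation~5.1 to confine all cross-block interaction to $G(v_1)\cup G(v_2)\cup G(v_3)$, the Section~3/4 theorems as the engine whose $k{+}1$ versus $k$ output creates the split between $(i)$ and $(ii)$, and the same sharpness witnesses ($K_n\Box P_m$ for $(i)$, Example~3.1 for $(ii)$). The one genuine divergence is the allocation. You concentrate all $k$ of the $G$-Steiner trees in a single spine block $G\Box\mathcal{A}_1$ and ask each remaining $\mathcal{A}_j$ to yield one auxiliary $S$-tree from scratch, merged in an internal copy $G(w_j)$. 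The paper instead fixes integers $0=k_0<k_1<\cdots<k_\ell=k$ (possible precisely because the hypothesis $\kappa_3(G)\geq\kappa_3(H)$ gives $k\geq\ell$) and reruns the Section~3/4 constructions inside each block $\bigl(\bigcup_{j=k_{i-1}+1}^{k_i}T_j'\bigr)\Box T_i$, extracting $k_i-k_{i-1}+1$ trees per block; by Observations~3.1 and~4.1 each block's footprint in the central copies is confined to its own $T_j'$s, so disjointness across blocks is automatic. This distributed allocation is exactly what dissolves the difficulty you flag at the end: since every block carries at least one lifted $G$-tree, a degenerate $T_i$ (a path in $H$ with some $v_r$ internal) is just another instance of Lemma~3.4, not a special case needing new machinery.

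That degenerate case is also where your plan, as stated, does not close. The spine can absorb only one degenerate lift, but among internally disjoint minimal Steiner trees in $H$ \emph{several} can simultaneously be paths with, say, $v_1$ internal ($v_1$ may have degree $2$ in each of them through pairwise distinct edges), and in case $(ii)$ there is no connectivity surplus in $G$ while every one of your $\ell-1$ auxiliaries must still be delivered; so ``its absence costs exactly one'' is the wrong accounting --- under your mechanism two degenerate lifts would cost two. The $\pm 1$ should be charged entirely to the spine (Theorem~4.1), and the degenerate auxiliaries must be built surplus-free. Fortunately the repair is cheap: let such an auxiliary cross the central copy \emph{through the pinned terminal itself}. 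If $\mathcal{A}_j$ is a path $v_2\cdots a\, v_1\, b\cdots v_3$, run $\{u_y\}\times P_{v_2,a}$ from $y$, join $(u_y,a)$ to $(u_x,a)$ inside $G(a)$, pass through $x$ via the two two-type edges $x(u_x,a)$ and $x(u_x,b)$, join $(u_x,b)$ to $(u_z,b)$ inside $G(b)$, and descend to $z$; this uses no vertex of $G(v_1)$ other than $x$, and distinct $j$'s use distinct two-type edges at $x$, so no conflict with the spine or with each other. (Alternatively, simply adopt the paper's allocation.) With that patch your counts $(k{+}1)+(\ell{-}1)$ and $k+(\ell{-}1)$ match the theorem; your two-equal case is safe as stated since at most one $v_1$-$v_2$ path is a single edge, and your all-equal case, giving $k+\ell$ trees through $\ell$ distinct neighbours of $v_1$, is fine --- indeed it is more explicit than the paper, which omits those subcases entirely.
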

\begin{proof}
Since the proofs of $(i)$ and $(ii)$ are similar, we only show
$(ii)$. Without loss of generality, we set
$\kappa_3(G):=k,\kappa_3(H):=\ell$. It suffices to show that for any
$S=\{x,y,z\}\subseteq G\Box H$, there exist $k+\ell$ internally
disjoint $S$-trees. Assume $V(G)=\{u_1,u_2,\ldots,u_n\}$ and
$V(T)=\{v_1,v_2,\ldots,v_m\}$.

Let $x\in V(G(v_i)),y\in V(G(v_j)),z\in V(G(v_k))$ be three distinct
vertices in $G\Box H$. We consider the following three cases.

{\flushleft\textbf{Case 1.}} $i,j,k$ are distinct integers.

Without loss of generality, set $i=1,j=2,k=3$. Since
$\kappa_3(H)=\ell$, there exist $\ell$ internally disjoint
$\{v_1,v_2,v_3\}$-trees $T_i,\,1\leq i \leq \ell$, in $H$. We use
$G_i$ to denote $G\Box T_i$. By Observation~$5.1$, we know that
$G\Box \bigcup_{i=1}^\ell T_i=\bigcup_{i=1}^\ell G_i$ and $G_i\cap
G_j=G(v_1)\cup G(v_2)\cup G(v_3)$ for $i\neq j$. Let $y',z'$ be the
vertices corresponding to $y,z$ in $G(v_1)$, respectively. Consider
the following three subcases.

{\flushleft\textbf{Case 1.1.}} $x,y',z '$ are distinct vertices in
$G(v_1)$.

Since $\kappa_3(G(v_1))=k$, there exist $k$ internally disjoint
$\{x,y',z'\}$-trees $T_j',\,1\leq j \leq k$, in $G(v_1)$. Let $k_0,
k_1,\ldots,k_\ell$ be integers such that
$0=k_0<k_1<\cdots<k_\ell=k$. Similar to the proofs of Theorems~$3.1$
and~$4.1$, we can construct $k_i-k_{i-1}+1$ internally disjoint
$S$-trees $T_{i,j_i}, 1\leq j_i\leq k_i-k_{i-1}+1$, in
$(\bigcup_{j=k_{i-1}+1}^{k_i} T_j')\Box T_i$ for each $i$, where
$1\leq i \leq \ell$. By Observation~$4.1$, $T_{i,j_i}$ and
$T_{r,j_r}$ are internally disjoint for different integers $i,j$.
Thus $T_{i,j_i},1\leq i \leq \ell, 1\leq j_i \leq k_i-k_{i-1}+1$ are
$k+\ell$ internally disjoint $S$-trees.

Subcase~$1.2$ (exact two of $x,y',z '$ are the same vertex in
$G(v_i)$) and Subcase~$1.2$ (all of $x, y',z'$ are the same vertex
in $G(v_i)$) can be proved similarly, and the details are omitted.

Furthermore, Case~$2$ (exact two of $i,j,k$ are the same integer)
and Case~$3$ ($i=j=k$) can be proved similarly, and the details are
also omitted.

We now show that the bound of $(i)$ is sharp. Let $K_n$ be a
complete graph with $n$ vertices, and $P_m$ be a path with $m$
vertices, where $m\geq 3$. We have $P_m=1$, and $K_n=n-2$ by Theorem
$1.2$. It is easy to check that $K_n\Box P_m=n-2+1=n-1$ by Theorem
$1.3$. For $(ii)$, Example~$3.1$ is a sharp example.
\end{proof}

\end{CJK}
\end{document}